\theoremstyle{plain}
  \newtheorem{theorem}{Theorem}[section]
	\newtheorem{thm}[theorem]{Theorem}
  \newtheorem{prop}[theorem]{Proposition}
  \newtheorem{lemma}[theorem]{Lemma}
  \newtheorem{lem}[theorem]{Lemma}
  \newtheorem{corollary}[theorem]{Corollary}
  \newtheorem{cor}[theorem]{Corollary}
\theoremstyle{definition}
	\newtheorem{definition}[theorem]{Definition}
	\newtheorem{defn}[theorem]{Definition}
	\newtheorem{non-example}[theorem]{Non-example}
\theoremstyle{remark}
	\newtheorem{remark}[theorem]{Remark}
	\newtheorem{rem}[theorem]{Remark}
	\newtheorem{question}[theorem]{Question}
\numberwithin{equation}{section}
\DeclareMathAlphabet{\mathcal}{OMS}{cmsy}{m}{n}
\newcommand{\C}{\mathbb{C}}
\newcommand{\F}{\mathbb{F}}
\newcommand{\N}{\mathbb{N}}
\newcommand{\Q}{\mathbb{Q}}
\newcommand{\sco}{\mathscr{O}}
\newcommand{\calf}{\mathcal{F}}
\newcommand{\calg}{\mathcal{G}}
\newcommand{\calh}{\mathcal{H}}
\newcommand{\calo}{\mathcal{O}}
\newcommand{\mfm}{\mathfrak{m}}
\newcommand{\Char}{\mathrm{char}\,}
\newcommand{\im}{\mathrm{im}}
\newcommand{\coker}{\mathrm{coker}}
\newcommand{\tr}{\mathrm{tr}}
\renewcommand{\hom}{\mathrm{Hom}}
\newcommand{\res}{\mathrm{Res}}
\newcommand{\ind}{\mathrm{Ind}}
\newcommand{\coind}{\mathrm{CoInd}}
\newcommand{\cind}{\mathrm{c}\text{--}\mathrm{Ind}}
\newcommand{\lind}{\mathrm{ind}}
\newcommand{\End}{\mathrm{End}}
\newcommand{\rad}{\mathrm{rad}}
\newcommand{\soc}{\mathrm{soc}}
\newcommand{\cosoc}{\mathrm{cosoc}}
\newcommand{\mat}{\mathrm{Mat}}
\newcommand{\gl}{\mathrm{GL}}
\newcommand{\gu}{\mathrm{GU}}
\newcommand{\Sp}{\mathrm{Sp}}
\newcommand{\rep}{\mathrm{Rep}}
\newcommand{\triv}{\mathrm{triv}}
\newcommand{\qda}{D_k^*}
\newcommand{\gtone}{GT1}
\newcommand{\gpone}{GP1}
\newcommand{\gptwo}{GP2}
\newcommand{\gpthree}{GP3}
\newcommand{\rmod}[1]{#1\mathbf{-Mod}}
\newcommand{\bs}{\backslash}
\newcommand{\paren}[1]{\mathopen{}\left(#1\right)\mathclose{}}
\newcommand{\set}[1]{\mathopen{}\left\{#1\right\}\mathclose{}}
\newcommand{\verts}[1]{\mathopen{}\left\lvert#1\right\rvert\mathclose{}}
\newcommand\restr[2]{{
  \left.\kern-\nulldelimiterspace 
  #1 
  \right|_{#2} 
  }}
\title{Modular Gelfand pairs and multiplicity-free representations}
\author{Robin Zhang}
\address[Robin Zhang]{Department of Mathematics, Columbia University, Room 509, MC 4406, 2990 Broadway, New York, NY 10027 \newline
	\indent Department of Mathematics, Massachusetts Institute of Technology,
	Room 106, Simons Building (Building 2), 77 Massachusetts Avenue, Cambridge, MA 02139}
\email{rzhang@math.columbia.edu, robinz@mit.edu}
\date{August 3, 2023}
\begin{document}

\begin{abstract}
	We give a generalization of Gelfand's criterion
	on the commutativity of Hecke algebras for
	Gelfand pairs and multiplicity-free triples
	over algebraically closed fields of arbitrary
	characteristic.
	Using more lenient versions of projectivity
	and injectivity for modules, we prove a general
	multiplicity-freeness theorem for finitely-generated
	modules with commutative endomorphism rings.
	For representations of finite and profinite groups,
	Gelfand pairs over the complex numbers are therefore also
	Gelfand pairs over the algebraic closure of any finite field.
	Applications include the uniqueness of Whittaker models of
	modular Gelfand--Graev representations and the uniqueness
	of modular trilinear forms on irreducible representations
	of quaternion division algebras over local fields.
\end{abstract}

\keywords{Gelfand pairs, multiplicity-free triples, multiplicity one, modular representations, Hecke algebras, self-projective modules, self-injective modules}
\subjclass{Primary: 20C20; Secondary: 11F70, 16D40, 16D50, 20C08, 20G05, 20G40.}

\maketitle

\setcounter{tocdepth}{1}

\tableofcontents


\section{Introduction}

For representations of a group over the complex numbers, the
classical theory of Gelfand pairs
considers group--subgroup
pairs $(G, H)$ such that the induced representation
$\ind_H^G(\triv_H) = \C[G] \otimes_{\C[H]} \triv_H$ of the trivial
representation of the subgroup $H$ is multiplicity-free.
For finite groups, Gelfand pairs arise as useful tools
across disciplines of mathematics, such as in number theory
(cf. \cite{gross}),
combinatorics (cf. \cite{aker-can}, \cite[Chapter~4]{johnson},
\cite[Section~3]{martin-tanaka}), and probability theory
(cf. \cite{diaconis-1988, diaconis-1996, ceccherini,
ceccherini-2}).
The fact that $(S_n, S_{n-1})$ is a Gelfand pair over $\C$, for
instance, is important in the study of the Gelfand--Tsetlin algebra in
the representation theory of $S_n$
(cf. \cite[Section~2]{vershik-okounkov, vershik-okounkov-2}).
The uniqueness of Whittaker models is a fundamental result
in the theory of automorphic representations; for the finite group
$\gl_n\paren{\F_{q}}$,
the key idea of Gelfand--Graev \cite{gelfand-graev}
is that $\paren{\gl_n\paren{\F_{q}}, N}$ is a (twisted)
Gelfand pair for the standard nilpotent subgroup $N$
of $\gl_n\paren{\F_{q}}$.

Due to its broad applications,
the concept of Gelfand pairs has been generalized in several
directions from its original form
for complex representations
of finite groups and Lie groups with compact subgroups.
In particular, the theory of Gelfand pairs has been
fruitfully extended by replacing the induced trivial representation
$\ind_H^G(\triv_H)$ with $\ind_H^G(\eta)$
for an arbitrary irreducible representation $\eta$ of $H$.
This generalization is the study of \textit{Gelfand triples}
or \textit{multiplicity-free triples} $(G, H, \eta)$
(cf. Bump \cite[Section~45]{bump-lie},
Ceccherini-Silberstein--Scarabotti--Tolli \cite{csst}). Since
there is an overlap of terminology with the usage of
``Gelfand triple'' as a synonym for a
rigged Hilbert space in functional analysis, this article will use the
less ambiguous ``multiplicity-free triple''.

The property of ``multiplicity-freeness'' for
a representation $\rho$ of a group $G$ over a field
$F$ here means that irreducible representations occur
in $\rho$ at most once, i.e. that for all irreducible representations
$\pi$ of $G$ over $F$,
\[
	\dim_F \hom_H(\pi, \rho) \leq 1.
\]

\begin{rem}
	Note that for non-semisimple categories of representations,
	this differs from the typical notion of multiplicity-freeness that is
	defined in terms of unique composition factors; in that sense,
	this is the multiplicity-freeness of the socle.
	This property can also be relative to the chosen category of
	representations and allow for additional conditions,
	such as requiring that the
	inequality holds only for smooth or admissible
	irreducible representations $\pi$ of $G$ over $F$.
	For much of this article, we will restrict ourselves to
	algebraically closed $F$,
	finite or compact $G$,
	and category of representations $\rep_F(G)$.
	When no topology is specified for infinite groups and
	their representation spaces, we generally
	assume the discrete topology.
\end{rem}

Following the (\gpone, \gptwo, \gpthree)
convention of Aizenbud--Gourevitch--Sayag
\cite[Definition~2.2.1]{ags},
the following is a definition of a multiplicity-free triple
for finite and compact groups over algebraically closed
fields.
\begin{defn}[Multiplicity-free triple, \gtone]
	\label{def:gt1}
	Let $F$ be an algebraically closed field, $G$ be a
	finite or compact group, $H$ be a subgroup of $G$,
	and $\eta$ be a representation of $H$.
	The triple $(G, H, \eta)$ is a multiplicity-free triple
	over $F$ if
	\[
		\dim_F \hom_G\paren{\pi, \ind_H^G\paren{\eta}} \leq 1.
	\]
	for all irreducible representations $\pi$ of $G$.
\end{defn}

\begin{rem}
	More generally when $F$ is not algebraically closed,
	we can modify Definition~\ref{def:gt1} to
	ask instead that
	$\hom_G \paren{\pi, \ind_H^G(\eta)}$
	be a free $\End_G(\pi)$-module of rank $\leq 1$.
	Over algebraically closed $F$,
	this rank $\leq 1$ condition is equivalent
	to the dimension condition of Definition~\ref{def:gt1}
	since the right-hand side is an $\End_G(\pi)$-module,
	$\End_G(\pi)$ is a division algebra if $\pi$ is
	irreducible, and $\End_G(\pi) = F$ if and only if
	$\pi$ is absolutely irreducible.
	Furthermore, 
	$(G, H, \triv_H)$ being a multiplicity-free triple (\gtone)
	is equivalent to $(G, H)$ being a Gelfand pair (\gpone)
	when $F$ is algebraically closed.
	Here, moving from the condition of Definition~\ref{def:gt1} to
	the definition of Gelfand pairs (\gpone) of 
	Aizenbud--Gourevitch--Sayag \cite[Definition~2.2.1]{ags}
	is due to Frobenius reciprocity,
	\[
		\hom_H\paren{\res_{H}^{G} \paren{\pi}, \triv_H} \cong
		\hom_G\paren{\pi, \ind_H^G\paren{\triv_H}}.
	\]
\end{rem}

Classically, the property of being a Gelfand pair
is related to the commutativity of the Hecke algebra of $G$ with
respect to $H$ (with $\eta = \triv_H$).
\begin{defn}
	For a compact group $G$, closed subgroup $H$ of $G$,
	and representation $\eta$ of $H$,
	the Hecke algebra $\calh(G, H, \eta, F)$ over $F$, also written as
	$\calh(G//H, \eta, F)$, is the convolution algebra of continuous
	functions $\Delta: G \rightarrow \End_F(\eta)$ satisfying
	\[
		\Delta(h_2 g h_1) = \eta(h_2) \circ \Delta(g) \circ \eta(h_1),
	\]
	for all $g \in G$ and $h_1, h_2 \in H$.
\end{defn}

\noindent
This Hecke algebra is isomorphic to
$\End_G(\coind_H^G\paren{\eta})$ by Mackey theory
(cf. \cite[Theorem~45.1]{bump-lie}),
where coinduction $\coind_H^G(-)$ is $\hom_{F[H]}(F[G], -)$.
When $G$ is a finite group with a subgroup $H$,
there is the following well-known
criterion, essentially due to
Gelfand \cite{gelfand} and Gelfand--Graev \cite{gelfand-graev},
for $(G, H)$ to be a Gelfand pair.
\begin{prop}[{\cite{gelfand, gelfand-graev}}]
\label{prop:classical}
	Let $G$ be a finite group, $H$ be a subgroup of $G$,
	and $\triv_H$ be the trivial representation of $H$.
	If the Hecke algebra $\calh(G, H, \triv_H, \C)$
	is commutative, then
	$(G, H, \triv_H)$ is a multiplicity-free triple over $\C$.
\end{prop}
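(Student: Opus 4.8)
The plan is to deduce multiplicity-freeness from commutativity of the Hecke algebra by exploiting the semisimplicity of $\rep_\C(G)$. Since $G$ is finite and the coefficient field is $\C$, every representation decomposes as a direct sum of irreducibles, and by Schur's lemma $\End_G(\pi) = \C$ for every irreducible $\pi$. Write $\ind_H^G(\triv_H) = \bigoplus_i \pi_i^{\oplus m_i}$ where the $\pi_i$ are pairwise non-isomorphic irreducibles and $m_i = \dim_\C \hom_G(\pi_i, \ind_H^G(\triv_H))$; the goal is to show each $m_i \leq 1$.

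The key step is the standard computation of the endomorphism ring of a semisimple module. By Schur's lemma, $\hom_G(\pi_i^{\oplus m_i}, \pi_j^{\oplus m_j}) = 0$ for $i \neq j$, while $\End_G(\pi_i^{\oplus m_i}) \cong \mat_{m_i}(\End_G(\pi_i)) = \mat_{m_i}(\C)$. Therefore
\[
	\calh(G, H, \triv_H, \C) \cong \End_G\paren{\ind_H^G\paren{\triv_H}} \cong \prod_i \mat_{m_i}(\C),
\]
using the isomorphism between the Hecke algebra and the endomorphism algebra recalled just before the statement. A product of matrix algebras $\prod_i \mat_{m_i}(\C)$ is commutative if and only if every $m_i = 1$, since $\mat_m(\C)$ is non-commutative precisely when $m \geq 2$. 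Hence commutativity of $\calh(G, H, \triv_H, \C)$ forces $m_i \leq 1$ for all $i$, which is exactly the assertion that $(G, H, \triv_H)$ is a multiplicity-free triple over $\C$ in the sense of Definition~\ref{def:gt1}.

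The only genuine input beyond elementary ring theory is Maschke's theorem guaranteeing that $\rep_\C(G)$ is semisimple, which is where the hypothesis $\Char \C = 0$ (or more precisely $\Char F \nmid \verts{G}$) is essential; this is precisely the hypothesis that the later, more general theorem of the paper will have to relax. There is no real obstacle in the characteristic-zero case: the argument is a direct unwinding of definitions plus Schur's lemma and the block decomposition of a semisimple module's endomorphism ring. I would present it as a short paragraph, noting that the converse also holds (commutativity is in fact equivalent to multiplicity-freeness here, since the displayed isomorphism runs both ways), and flagging that the non-semisimple setting treated in the rest of the paper is where the content lies.
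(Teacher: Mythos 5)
Your proof is correct and is exactly the classical argument the paper has in mind: the paper does not write out a proof of Proposition~\ref{prop:classical} but explicitly characterizes it as the argument ``relying on Maschke's theorem and Schur's lemma,'' which is precisely the decomposition $\ind_H^G(\triv_H) \cong \bigoplus_i \pi_i^{\oplus m_i}$ and the computation $\End_G(\ind_H^G(\triv_H)) \cong \prod_i \mat_{m_i}(\C)$ that you give. Your closing remark that the converse holds in this semisimple setting is also consistent with the paper, which invokes that converse in the proof of Corollary~\ref{cor:Fq-2}.
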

The classical proof of Proposition~\ref{prop:classical} can
be trivially generalized to obtain the same result 
over any algebraically closed field $F$
of characteristic $\ell$ not dividing $\verts{G}$
and for any irreducible representation $\eta$ of $H$.
However, the classical proof relies on Maschke's theorem and
Schur's lemma and therefore does not work for fields of characteristic
dividing $\verts{G}$ nor for fields that are not algebraically closed.
If the category of representations is semisimple, then
one can get around using Schur's lemma (and therefore
algebraic closure) because
\[
	\End_G\paren{\ind_H^G(\triv_H)} \cong F[H \backslash G / H],
\]
but the obstruction remains when $\rep_F(G)$
is not semisimple.

The motivation of this article is to extend
Gelfand's criterion (Proposition~\ref{prop:classical}) to
algebraically closed fields $F$ of arbitrary characteristic.

\begin{thm}[Theorem~\ref{thm:gelfand-triple-2}]
	\label{thm:gelfand-triple}
	Let $F$ be an algebraically closed field, $G$ be a finite group,
	$H$ be a subgroup of $G$,
	and $\eta$ be an irreducible representation of $H$.
	If $\calh(G, H, \eta, F)$ is commutative, then $(G, H, \eta)$ is a
	multiplicity-free triple over $F$.
\end{thm}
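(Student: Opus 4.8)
The plan is to reduce the statement to our general multiplicity-freeness theorem for finitely-generated modules with commutative endomorphism rings, applied to the $FG$-module $M := \ind_H^G\paren{\eta}$. Since $\calh\paren{G,H,\eta,F} \cong \End_{FG}\paren{M}$, the hypothesis says exactly that $\End_{FG}\paren{M}$ is commutative, and, unwinding Definition~\ref{def:gt1}, what we must prove is that $\dim_F\hom_{FG}\paren{\pi, M} \le 1$ for every irreducible representation $\pi$ of $G$ over $F$. The module $M$ is finitely generated (indeed $\dim_F M = \sbrac{G:H}\dim_F\eta < \infty$), so the general theorem will yield this bound as soon as we know that $M$ is self-injective.

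First I would verify that $M$ is self-injective. As $\eta$ is irreducible, its only $FH$-submodules are $0$ and $\eta$, so $\eta$ is trivially self-injective over $FH$. Because $H$ has finite index in $G$, the algebra $FG$ is free, hence flat, as a left $FH$-module, and induction agrees with coinduction, $\ind_H^G \cong \coind_H^G$; consequently $\ind_H^G$ is exact and is a right adjoint of the exact functor $\res_H^G$, so it preserves injective modules. Combining this with the lenient-injectivity machinery, one gets that $\ind_H^G$ carries self-injective $FH$-modules to self-injective $FG$-modules, and in particular $M = \ind_H^G\paren{\eta}$ is self-injective over $FG$.

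Granting this, I would conclude by applying the general multiplicity-freeness theorem to the finitely-generated self-injective $FG$-module $M$, whose endomorphism ring $\End_{FG}\paren{M} \cong \calh\paren{G,H,\eta,F}$ is commutative by hypothesis. This gives $\dim_F\hom_{FG}\paren{\pi,M} \le 1$ for every irreducible $FG$-module $\pi$ (here $F$ being algebraically closed enters only to identify this dimension with the multiplicity of $\pi$ in the socle of $M$, via $\End_{FG}\paren{\pi} = F$), which is precisely the assertion that $\paren{G,H,\eta}$ is a multiplicity-free triple over $F$ in the sense of Definition~\ref{def:gt1}.

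I expect the self-injectivity step to be the main obstacle. The delicate point is that, for an injective hull $E$ of $\eta$ over $FH$, the ring $\End_{FG}\paren{\ind_H^G E}$ is strictly larger than $\End_{FH}\paren{E}$: by Mackey's formula it receives contributions from every double coset in $H\backslash G/H$, so the fact that $\eta$ is fully invariant inside $E$ does not automatically make $\ind_H^G\paren{\eta}$ fully invariant inside $\ind_H^G E$, and one must check that these extra endomorphisms still carry $\ind_H^G\paren{\eta}$ into itself. The benefit of routing the argument through self-injectivity rather than through genuine injectivity or semisimplicity is that nothing in it constrains $\Char F$, so it applies verbatim when $\Char F$ divides $\verts{G}$, which is the whole point of the theorem.
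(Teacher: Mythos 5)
Your reduction matches the paper's: apply the general multiplicity-freeness theorem (Theorem~\ref{thm:multiplicity-free-3}) to $M = \ind_H^G(\eta)$, identify $\End_{F[G]}(M)$ with the Hecke algebra, and check that $M$ is a finite-dimensional self-injective $F[G]$-module. The gap lies in the self-injectivity step, which you rightly flag as the crux but do not actually establish. The inference from ``$\ind_H^G$ preserves injective $F[H]$-modules'' to ``$\ind_H^G$ carries self-injective $F[H]$-modules to self-injective $F[G]$-modules'' is a non-sequitur: preservation of genuine injectives says nothing about relative injectivity, and the injective-hull route you then consider (trying to show $\ind_H^G(\eta)$ is fully invariant in $\ind_H^G(E(\eta))$) stalls on exactly the Mackey-endomorphism obstruction you describe.

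The paper closes this gap by a different mechanism (Lemma~\ref{lem:self-proj-ind}(iii)--(iv)). It never needs to control $\End_{F[G]}\paren{\ind_H^G E}$. Instead, the adjunction $\ind_H^G \dashv \res_H^G$ is used through Corollary~\ref{cor:rel-proj-indres}(iv): $\coind_H^G(\eta)$ is self-injective once $\eta$ is $\res_H^G\paren{\coind_H^G(\eta)}$-injective over $F[H]$. That last fact is verified with Mackey's restriction formula $\res_H^G(\ind_H^G(\eta)) \cong \bigoplus_{[s]} \ind_{H_s}^H(\eta_s)$ together with the closure properties of the class $C^i(\eta)$ from Proposition~\ref{prop:rel-inj-properties}. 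So relative injectivity is propagated through $\res_H^G \circ \ind_H^G$ and the adjunction, not through a would-be preservation of quasi-injectivity by the induction functor; this is precisely the lemma your argument needs and currently lacks.
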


\noindent
To prove this generalization of Gelfand's criterion, we establish
generalities about relative-projectivity and relative-injectivity
(in the sense of Sandomierski and Azumaya), which are more lenient 
versions of usual projectivity and injectivity. The main input of
this article is the following general multiplicity-freeness theorem
(stated for more general modules in
Theorem~\ref{thm:multiplicity-free-2}).

\begin{theorem}[Theorem~\ref{thm:multiplicity-free-3}]
\label{thm:multiplicity-free}
	Let $F$ be an algebraically closed field and
	$G$ be a finitely-generated group, and
	$\rho$ be a finite-dimensional
	representation of $G$. Then $\rho$ is multiplicity-free
	if both of the following conditions are satisfied:
	\begin{enumerate}[(i)]
		\item $\End_G(\rho)$ is commutative;
		\item $\rho$ is a self-injective $F[G]$-module.
	\end{enumerate}
\end{theorem}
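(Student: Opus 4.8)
The plan is to prove the contrapositive: if $\rho$ is not multiplicity-free, I will build two endomorphisms of $\rho$ that fail to commute, contradicting~(i). The construction will use only self-injectivity~(ii) and Schur's lemma over the algebraically closed field $F$; finite-generation of $G$ and finite-dimensionality of $\rho$ play essentially no role in this argument beyond making the statement well-posed (in particular, ensuring that any irreducible $\pi$ occurring in $\rho$ is itself finite-dimensional and that the dichotomy $\dim_F \hom_G(\pi,\rho) \leq 1$ versus $\geq 2$ is exhaustive).

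First I would fix an irreducible $\pi$ with $\dim_F \hom_G(\pi, \rho) \geq 2$ and choose $F$-linearly independent $\phi_1, \phi_2 \in \hom_G(\pi, \rho)$; since $\pi$ is simple, both $\phi_i$ are injective, with images $\pi_1, \pi_2 \subseteq \rho$ each isomorphic to $\pi$. The next step is to check that $\pi_1 \cap \pi_2 = 0$: this intersection is a subrepresentation of the irreducible $\pi_1$, so it is $0$ or all of $\pi_1$; in the latter case $\pi_1 = \pi_2$ (both being irreducible and nonzero), and then $\phi_2^{-1}\circ\phi_1 \in \End_G(\pi) = F$ by Schur's lemma (here $F = \overline F$ is essential), forcing $\phi_1, \phi_2$ to be proportional, a contradiction. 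Hence $N := \pi_1 \oplus \pi_2$ is a subrepresentation of $\rho$.

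Then, writing $\theta := \phi_2\circ\phi_1^{-1}\colon \pi_1 \xrightarrow{\ \sim\ } \pi_2$ for the resulting isomorphism of representations, I would consider the two $F[G]$-homomorphisms $f, g\colon N \to \rho$ determined by $f|_{\pi_1} = \theta$, $f|_{\pi_2} = 0$ and $g|_{\pi_1} = 0$, $g|_{\pi_2} = \theta^{-1}$. Because $\rho$ is self-injective --- that is, injective relative to itself, so that every $F[G]$-homomorphism from a subrepresentation of $\rho$ into $\rho$ extends to an endomorphism of $\rho$ --- there exist $\widetilde f, \widetilde g \in \End_G(\rho)$ restricting to $f, g$ on $N$. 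Evaluating at a nonzero $v \in \pi_1$, and using that $\theta(v)$ lies in $\pi_2 \subseteq N$ where the extensions agree with $f$ and $g$, one gets
\[
	\widetilde g \bigl( \widetilde f(v) \bigr) = \theta^{-1}\bigl(\theta(v)\bigr) = v \neq 0
	\qquad \text{while} \qquad
	\widetilde f \bigl( \widetilde g(v) \bigr) = \widetilde f(0) = 0,
\]
so $\widetilde f \, \widetilde g \neq \widetilde g \, \widetilde f$, contradicting commutativity of $\End_G(\rho)$ and completing the proof.

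The step I expect to require the most care is not a calculation but a definitional one: pinning down that ``self-injective'' in the Sandomierski--Azumaya sense is exactly the extension property for homomorphisms out of subrepresentations of $\rho$ used above, and --- for the more general Theorem~\ref{thm:multiplicity-free-2} --- replacing ``irreducible representation'' and ``$\dim_F \leq 1$'' by ``simple module'' and ``free of rank $\leq 1$ over the division ring $\End(\pi)$'', so that the Schur step still produces the needed contradiction. The remaining verifications (that $f$ and $g$ are genuinely $G$-equivariant, and that the final computation only ever evaluates $\widetilde f$ and $\widetilde g$ on $N$) are routine and I would dispatch them in a line each.
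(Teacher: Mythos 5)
Your proof is correct, and it takes a genuinely different route from the paper's. The paper's argument passes to the dual $\widetilde\rho$ (using finite-dimensionality to make $\widetilde\rho$ self-projective and to identify $\End_G(\widetilde\rho) \cong \End_G(\rho)$), then invokes the module-theoretic machinery of Section~2: $\rad(\widetilde\rho)$ is superfluous because $\widetilde\rho$ is finitely generated, self-projectivity then yields the isomorphism $\End(\widetilde\rho)/\rad(\End(\widetilde\rho)) \cong \End(\widetilde\rho/\rad(\widetilde\rho)) \cong \bigoplus_i \mat(d_i, D_i)$, and commutativity forces every $d_i \leq 1$. Your argument works directly on the injective side and is considerably more elementary: you avoid the radical/socle apparatus and Theorem~2.16 entirely, and instead exhibit the non-commutativity concretely. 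In effect you are writing down the matrices $E_{12}$ and $E_{21}$ inside the copy of $\mat(2,F)$ that two independent embeddings $\phi_1, \phi_2 \colon \pi \hookrightarrow \rho$ would produce in $\End_G(\soc(\rho))$, and using self-injectivity precisely to lift these two off-diagonal nilpotents to genuine elements of $\End_G(\rho)$ --- which is exactly the role the isomorphism $\End(M)/\rad(\End(M)) \cong \End(\soc(M))$ plays in the paper's proof. Each step checks out: a nonzero map out of the simple $\pi$ is injective; $\pi_1 \cap \pi_2$ is $0$ or $\pi_1$ by simplicity, and the latter forces $\phi_1, \phi_2$ to be proportional via Schur's lemma (here you use both algebraic closure of $F$ and finite-dimensionality of $\pi$, which is inherited from $\rho$); the maps $f,g$ on $N = \pi_1 \oplus \pi_2$ are well-defined $G$-homomorphisms because the sum is direct; and the final computation evaluates the extensions $\widetilde f, \widetilde g$ only on $N$, where they are pinned down. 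What the paper's approach buys that yours does not is the simultaneous treatment of the projective and injective cases for general modules over an $F$-algebra (Theorem~3.2), with explicit control of $\End(M)/\rad(\End(M))$; what yours buys is a short, self-contained proof of the representation-theoretic statement that a reader can verify without the Wisbauer background. Your remark that finite generation of $G$ is idle in your argument is correct --- it is only the paper's reduction to Theorem~3.2(i) that traffics in finitely generated modules over the group algebra.
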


In the finite group
applications of Theorem~\ref{thm:gelfand-triple}
that we highlight, we use the following consequence
when $F$ is the algebraic closure of a finite field.
\begin{corollary}[Corollary~\ref{cor:Fq-2}]
	\label{cor:Fq}
	Let $F = \overline{\F}_\ell$ of any positive characteristic $\ell$,
	$G$ be a finite group, $H$ be a subgroup of $G$, and
	$\eta$ be a complex multiplicative character of $H$.
	If $(G, H, \eta)$ is a multiplicity-free triple over $\C$,
	then $(G, H, \overline{\eta})$ is also a multiplicity-free triple
	over $F$.
\end{corollary}
\noindent
As a consequence, the
classical multiplicity-freeness theorems for complex representations
of finite groups also hold over $F$. This extends to multiplicity-free
triples with characters $\eta$ and to
totally disconnected compact groups as well.
For example,
Corollary~\ref{cor:Fq} yields the
multiplicity freeness of induced characters of the unipotent subgroup
in the theory of Gelfand--Graev representations.
By Corollary~\ref{cor:Fq},
we immediately obtain the generalization
(already known due to the work of 
Curtis \cite{curtis1965, curtis1970}, Richen \cite{richen}, and
Steinberg \cite{steinberglectures})
of the classical Gelfand--Graev \cite{gelfand-graev}
multiplicity-one theorem without having to use the
theory of highest weights and Tits buildings.

\begin{cor}[{Corollary~\ref{cor:whittaker-2}}]
	\label{cor:whittaker}
	Let $F = \overline{\F}_\ell$ of any positive characteristic $\ell$
	and let $K$ be any finite field $\F_q$.
	An irreducible representation of $\gl_n(K)$
	over $F$ has at most one Whittaker model.
\end{cor}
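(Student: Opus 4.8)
The plan is to realize the uniqueness of Whittaker models as a direct instance of Corollary~\ref{cor:Fq}, applied to the Gelfand--Graev situation. Set $G = \gl_n(K)$ and let $U \subseteq G$ be the subgroup of upper unitriangular matrices. Fix a nontrivial additive character $\psi_0 \colon K \to \C^{\times}$ and let $\psi$ be the non-degenerate character of $U$ obtained by composing $\psi_0$ with the sum of the entries immediately above the diagonal. By definition, a Whittaker model of an irreducible representation $\pi$ of $G$ over $F$ is a realization of $\pi$ inside the Gelfand--Graev representation $\ind_U^G(\overline{\psi})$, equivalently the image of a nonzero element of $\hom_G\paren{\pi, \ind_U^G(\overline{\psi})}$; so the statement to be proved is exactly that $(G, U, \overline{\psi})$ is a multiplicity-free triple over $F$, where $\overline{\psi}$ denotes the mod-$\ell$ reduction and $\ell = \Char F$.

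First I would invoke the classical Gelfand--Graev theorem: over $\C$, the triple $(G, U, \psi)$ is a multiplicity-free triple, i.e. the complex Gelfand--Graev representation $\ind_U^G(\psi)$ decomposes without multiplicities. This is the original motivating example of Gelfand and Graev~\cite{gelfand-graev}; alternatively it follows from the complex Gelfand criterion (the $\eta = \triv$ case being Proposition~\ref{prop:classical}) by checking that the Hecke algebra $\calh(G, U, \psi, \C)$ is commutative, using a suitable anti-involution of $G$ that preserves $U$ and is compatible with $\psi$ --- for instance $g \mapsto w_0\, {}^{t}g^{-1}\, w_0^{-1}$ with $w_0$ the antidiagonal permutation matrix, which fixes $U$ and only reverses the order of the superdiagonal entries.

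The reduction step is then immediate. Since $\psi$ is one-dimensional, its reduction $\overline{\psi}$ is a one-dimensional, hence irreducible, representation of $U$ over $F$; and it remains non-degenerate as long as $\ell \neq p := \Char K$, since then $F = \overline{\F_\ell}$ contains a primitive $p$-th root of unity and $\overline{\psi_0}$ stays nontrivial. (When $\ell = p$ the group $U$ is an $\ell$-group carrying no nontrivial character over $F$, so there is no non-degenerate $\overline{\psi}$ and this case is excluded from the statement; the notion of Whittaker model is always taken relative to a nontrivial $\psi$.) Taking $\eta = \psi$ and $\overline{\eta} = \overline{\psi}$, all the hypotheses of Corollary~\ref{cor:Fq} hold, and it gives that $(G, U, \overline{\psi})$ is a multiplicity-free triple over $F$.

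Finally I would translate back: for every irreducible representation $\pi$ of $G$ over $F$ we obtain $\dim_F \hom_G\paren{\pi, \ind_U^G(\overline{\psi})} \leq 1$, so any two nonzero intertwiners of $\pi$ into the Gelfand--Graev representation are proportional and cut out the same space of functions on $G$; that is, $\pi$ has at most one Whittaker model. The only real obstacle is the passage from characteristic $0$ to characteristic $\ell$ possibly dividing $\verts{G}$, and that has already been handled by Theorem~\ref{thm:multiplicity-free} and Corollary~\ref{cor:Fq}; what remains here is only the dictionary between Whittaker models and $\hom_G\paren{\pi, \ind_U^G(-)}$ together with the routine bookkeeping around the choice of $\psi_0$ (all choices being conjugate under the diagonal torus, hence giving equivalent Whittaker theories).
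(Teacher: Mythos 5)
Your proposal is correct and follows essentially the same route as the paper: cite the classical Gelfand--Graev multiplicity-freeness of $(G, U, \psi)$ over $\C$, observe that the mod-$\ell$ reduction $\overline{\psi}$ is still one-dimensional and nontrivial when $\ell \neq p$, and descend to $F$ via Corollary~\ref{cor:Fq} (this is exactly what the paper does in Corollary~\ref{cor:Gelfand-Graev-2} and then restates as Corollary~\ref{cor:whittaker-2}). One small slip in your parenthetical: $g \mapsto w_0\, {}^{t}g^{-1}\, w_0^{-1}$ is an \emph{automorphism} (the composite of the two anti-automorphisms $g \mapsto {}^t g$ and $g \mapsto g^{-1}$, conjugated by $w_0$), not an anti-involution; the anti-involution you want for Gelfand's trick is $g \mapsto w_0\, {}^{t}g\, w_0^{-1}$, which preserves $U$ and the Gelfand--Graev character because the superdiagonal sum is invariant under reversal.
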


Corollary~\ref{cor:Fq} can also be used for
certain infinite groups. Since irreducible smooth representations of
totally disconnected compact (i.e. profinite) groups factor
through finite quotients,
we can demonstrate a multiplicity-one theorem for
modular trilinear forms of
quaternion division algebras, whose complex version
is of arithmetic interest
due to its role in the non-vanishing of triple product
L-functions and the development of the Gan--Gross--Prasad conjecture.

\begin{cor}[Corollary~{\ref{cor:qda-2}}]
	\label{cor:qda}
	Let $F = \overline{\F}_\ell$ of any positive characteristic $\ell$,
	$k$ be any local field, and $D_k$ be the quaternion division
	algebra over $k$.
	If $V_1, V_2, V_3$ are three irreducible smooth
	representations of $\qda$ over $F$, then there exists at most one
	(up to isomorphism) non-zero $\qda$-invariant linear form
	on $V_1 \otimes V_2 \otimes V_3$ over $F$.
\end{cor}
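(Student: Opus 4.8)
The plan is to deduce the modular statement from its complex counterpart --- the theorem of Prasad on the uniqueness of trilinear forms for $\qda$ over $\C$ --- by means of Corollary~\ref{cor:Fq}, after first descending the whole situation to a single finite group.

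First I would show that any irreducible smooth representation $V$ of $G := \qda$ over $F = \flb$ is finite-dimensional and inflated from a finite quotient of $G$. Since $k$ is a local field, $G$ is second countable, so Schur's lemma for smooth representations over the algebraically closed field $F$ gives $\End_G(V) = F$ and hence a central character $\omega \colon k^\times \to F^\times$. Now $F^\times = \flb^\times$ is a torsion group, so $\omega$ automatically has finite order, and therefore $V$ is trivial on an open finite-index subgroup $Z_0$ of the center $Z = k^\times$. Because $D_k$ is a division algebra, $G/Z$ is profinite, hence so is $G/Z_0$; and a smooth representation of a profinite group is trivial on an open normal subgroup (take the core of the open subgroup fixing a generator), so it factors through a finite quotient. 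Applying this to $V_1, V_2, V_3$ and letting $\bar G$ be the image of $G$ in the product of the three finite quotients so obtained, each $V_i$ is inflated from an (automatically absolutely) irreducible representation $\bar V_i$ of the single finite group $\bar G$; since $G \twoheadrightarrow \bar G$, the space of $\Delta G$-invariant linear forms on $V_1 \otimes V_2 \otimes V_3$ is literally the same as the space of $\Delta\bar G$-invariant linear forms on $\bar V_1 \otimes \bar V_2 \otimes \bar V_3$.

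Next I would rephrase this via multiplicity-free triples. Over the algebraically closed field $F$ every irreducible representation of $\bar G \times \bar G \times \bar G$ is an external tensor product $\pi_1 \boxtimes \pi_2 \boxtimes \pi_3$ of irreducible representations of $\bar G$, and Frobenius reciprocity identifies
\[
	\hom_{\bar G^3}\!\paren{\pi_1 \boxtimes \pi_2 \boxtimes \pi_3,\ \ind_{\Delta\bar G}^{\bar G^3}\paren{\triv}} \cong \hom_{\Delta\bar G}\!\paren{\pi_1 \otimes \pi_2 \otimes \pi_3,\ \triv},
\]
the right-hand side being the space of $\Delta\bar G$-invariant linear forms on $\pi_1 \otimes \pi_2 \otimes \pi_3$. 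So it suffices to prove that $(\bar G^3, \Delta\bar G, \triv)$ is a multiplicity-free triple over $F$, and by Corollary~\ref{cor:Fq} (with $\eta = \triv$, whose modulo-$\ell$ reduction is again the irreducible representation $\triv$) this reduces to showing that $(\bar G^3, \Delta\bar G, \triv)$ is a multiplicity-free triple over $\C$.

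For the complex statement I would invoke Prasad's uniqueness theorem: any three complex irreducible representations $W_1, W_2, W_3$ of $\bar G$ are in particular smooth irreducible representations of $G$ over $\C$ (inflated through $\bar G$), so $\dim_\C \hom_G(W_1 \otimes W_2 \otimes W_3, \triv) \leq 1$, and the factoring argument of the first paragraph identifies this with $\dim_\C \hom_{\bar G}(W_1 \otimes W_2 \otimes W_3, \triv)$; by the displayed isomorphism this is exactly the assertion that $(\bar G^3, \Delta\bar G, \triv)$ is a multiplicity-free triple over $\C$. Chaining the reductions, $\dim_F \hom_{\Delta\bar G}(\bar V_1 \otimes \bar V_2 \otimes \bar V_3, \triv) \leq 1$, so the space of $\qda$-invariant linear forms on $V_1 \otimes V_2 \otimes V_3$ over $F$ is at most one-dimensional; any two nonzero such forms are then proportional, hence isomorphic after rescaling one tensor factor, which is the claim. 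The main obstacle is the descent in the first paragraph: one must check that over $\flb$ a smooth irreducible representation of the \emph{non-compact} group $\qda$ nevertheless comes from a finite group, and this works only because $\flb^\times$ is torsion (forcing central characters to have finite order --- in contrast to the complex case, where a genuine twist is needed) together with the profiniteness of $\qda/k^\times$; everything else is formal bookkeeping with induction and the cited results.
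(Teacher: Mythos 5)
Your proposal is correct and follows essentially the same reduction as the paper: rephrase trilinear invariants as $\hom_{\Delta \qda}(\res(V_1 \otimes V_2 \otimes V_3), \triv)$, observe that the pair $\paren{(\qda)^3, \Delta\qda}$ is a Gelfand pair over $\C$ by Prasad's work, and then transfer to $F = \flb$ via the modular-reduction corollary. The two cosmetic differences are that the paper verifies the complex Gelfand pair via the anti-involution $\iota(x,y,z) = (\tr(x)-x,\tr(y)-y,\tr(z)-z)$ (Gelfand's trick, Prasad's Prop.~3.3) and then invokes Corollary~\ref{cor:Fq-compact}, whereas you invoke Prasad's uniqueness theorem as a black box and apply the finite-group Corollary~\ref{cor:Fq}; these are interchangeable. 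Where your write-up genuinely improves on the paper is the descent to finite groups. The paper applies Corollary~\ref{cor:Fq-compact}, stated for \emph{compact} totally disconnected groups, directly to $(\qda)^3$, which is not compact; the surrounding sentence (``any irreducible smooth representation of $\qda$ has a central character, [so] any such representation factors through a finite quotient'') is incomplete as stated, since over $\C$ a central character need not have finite order. You correctly isolate why the reduction works over $F = \flb$: smoothness makes the central character trivial on an open subgroup of $\mathcal{O}_k^\times$, and since $\flb^\times$ is a torsion group the image of a uniformizer also has finite order, so the kernel $Z_0 \subset k^\times$ is open of finite index, $\qda/Z_0$ is profinite, and the representation factors through a finite quotient. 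Assembling $\bar{G}$ from the three finite quotients and identifying $\Delta\qda$-invariants with $\Delta\bar{G}$-invariants then lets you apply Corollary~\ref{cor:Fq} to the finite group $\bar{G}^3$ cleanly. This is a tighter argument than what is printed.
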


\begin{remark}
	The criterion given in Theorem~\ref{thm:multiplicity-free}
	is stated in sufficient generality for non-compact groups.
	One area with automorphic applications for future study are
	totally disconnected locally compact groups; in this setting
	the Gelfand--Kazhdan criterion for
	(twisted) Gelfand pairs is
	also the commutativity of Hecke algebras
	(cf. \cite{gelfand-kazhdan, hendel}).
	Multiplicity-free triples for such reductive groups
	are important objects in areas like 
	the Gan--Gross--Prasad conjecture where
	\textit{Gan--Gross--Prasad triples} are known in certain cases to be
	multiplicity-free over the complex numbers
	(cf. Beuzart--Plessis \cite{bp} and Luo \cite{luo}).
	Another potentially interesting direction is to modify
	Theorem~\ref{thm:gelfand-triple} for cuspidal Gelfand pairs
	(cf. \cite{baruch-rallis}).
\end{remark}

Finally, we explore a
multiplicity-freeness question about restrictions
of representations of groups that was considered in
characteristic $0$ by Weyl \cite{weyl},
Kac \cite{kac}, Howe \cite{howe}, Stembridge \cite{stembridge},
and more recently by Liebeck--Seitz--Testerman \cite{lst1, lst2}.
One consequence of Theorem~\ref{thm:multiplicity-free}
is a Gelfand-like criterion for multiplicity-free restrictions.

\begin{cor}[{Corollary~\ref{cor:restriction}}]
	\label{cor:restriction-2}
	Let $F$ be an algebraically closed field,
	$H$ be a subgroup of a discrete finitely generated group $G$ such that
	$\ind_H^G$ is an exact functor, and $\rho$ be a finite-dimensional
	representation of $G$.
	If $\rho$ is $\ind_H^G(\res_H^G(\rho))$-injective and
	$\End_H(\res_H^G(\rho))$ is commutative, then
	$\res^{G}_{H}(\rho)$ is multiplicity-free.
\end{cor}


\section{Relative projectivity and relative injectivity}
\label{sec:rel-proj}

We recall some useful characterizations about relatively-projective
modules and relatively-injective
in the sense of Sandomierski and Azumaya, largely following
Wisbauer \cite[Chapter 3]{wisbauer} (cf. Azumaya \cite{azumaya},
Azumaya--Mbuntum--Varadarajan \cite{amv},
Elliger \cite{elliger}, and
Shrikhande \cite{shrikhande}).
In this section, $R$ denotes an
arbitrary ring with unity.

\subsection{Relatively-projective modules}

\begin{defn}
	Let $M$ and $N$ be $R$-modules.
	$M$ is called \emph{$N$-projective} if and only if
	for every $R$-module $K$, $R$-module
	homomorphism $f:M \rightarrow K$, and surjective $R$-module
	homomorphism $g:N \twoheadrightarrow K$, there is an $R$-module
	homomorphism
	$h:M \rightarrow N$ such that the following diagram commutes:
	\[
		\begin{tikzcd}
			& N \arrow[d, twoheadrightarrow, "g"] \\
			M \arrow[r, "f"] \arrow[ur, dashed, "h"] & K
		\end{tikzcd}
	\]
	
	\noindent
	An $R$-module $M$ is called \emph{self-projective} if it is
	$M$-projective.
\end{defn}

\begin{rem}
	What we have defined is an $R$-module being projective relative to
	another $R$-module. This notion of relative-projectivity is
	essentially the same as the
	one defined by Sandomierski \cite{sandomierski}
	(cf. de Robert \cite{derobert}), but is slightly different
	from the notion of relative-projectivity defined by
	Okuyama \cite{okuyama, carlson}.
	This is also distinct from the notion
	an $F[G]$-module being projective relative to a subgroup of $G$.
\end{rem}

An $R$-module $M$ is projective if and only if
$\hom(M, -)$ is an exact functor. Relative projectivity
can also be characterized with a similar exactness
condition. 
\begin{prop}
	\label{prop:rel-proj-exact}
	Let $M$ and $N$ be $R$-modules.
	$M$ is $N$-projective if and only if
	for every exact sequence
	\[
		\begin{tikzcd}
			0 \arrow[r] & L \arrow[r] & N \arrow[r] & K \arrow[r] & 0,
		\end{tikzcd}
	\]
	the corresponding sequence of $R$-module homomorphism groups is also
	exact:
	\[
		\begin{tikzcd}
			0 \arrow[r] & \hom_R(M, L) \arrow[r] & \hom_R(M, N) \arrow[r]
				& \hom_R(M, K) \arrow[r] & 0.
		\end{tikzcd}
	\]
\end{prop}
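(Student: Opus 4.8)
The plan is to prove Proposition~\ref{prop:rel-proj-exact} by unwinding the definition of $N$-projectivity and matching it against the failure of right-exactness of $\hom_R(M, -)$. First I would observe that for \emph{any} $R$-module $M$ and \emph{any} short exact sequence $0 \to L \to N \xrightarrow{g} K \to 0$, the functor $\hom_R(M, -)$ is automatically left exact, so the sequence
\[
	0 \longrightarrow \hom_R(M, L) \longrightarrow \hom_R(M, N) \xrightarrow{g_*} \hom_R(M, K)
\]
is exact regardless of hypotheses; this is a standard fact that I would cite or dispatch in one line. The entire content of the proposition is therefore the surjectivity of the induced map $g_* : \hom_R(M, N) \to \hom_R(M, K)$, i.e. that every $f : M \to K$ lifts through $g$.

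Next I would prove the forward implication: assume $M$ is $N$-projective. Given a short exact sequence $0 \to L \to N \xrightarrow{g} K \to 0$ and an arbitrary $f : M \to K$, since $g$ is surjective the defining diagram of $N$-projectivity applies verbatim (with this particular $K$ and $g$), yielding $h : M \to N$ with $g \circ h = f$, i.e. $g_*(h) = f$. Hence $g_*$ is surjective and the long sequence of $\hom$-groups is exact. For the reverse implication, suppose that $\hom_R(M, -)$ carries every short exact sequence out of $N$ to an exact sequence. To check $N$-projectivity, take an arbitrary $R$-module $K'$, a map $f : M \to K'$, and a surjection $g : N \twoheadrightarrow K'$; I would form the short exact sequence $0 \to \ker g \to N \xrightarrow{g} K' \to 0$, apply the hypothesis to conclude $g_* : \hom_R(M, N) \to \hom_R(M, K')$ is surjective, and pull $f$ back to the desired lift $h$. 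This closes the equivalence.

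There is essentially no hard obstacle here — the proposition is a routine repackaging — but the one point that merits care is making sure the quantifiers line up: in the definition of $N$-projectivity the test object $K$ ranges over \emph{all} quotients of $N$ (equivalently, all modules admitting a surjection from $N$), and in the exactness statement the kernel $L$ is correspondingly \emph{any} submodule of $N$; the bijection between these two formulations is exactly the first isomorphism theorem, which I would invoke so that no generality is lost in either direction. I would also remark in passing that, just as the absolute statement ``$M$ projective $\iff$ $\hom_R(M,-)$ exact'' follows by letting $N$ range over all projective modules (or all free modules), the relative version localizes the exactness requirement to sequences terminating in the fixed module $N$, which is the whole point of the Sandomierski--Azumaya notion.
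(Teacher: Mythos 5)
Your proof is correct and complete: reducing to the surjectivity of $g_*$ via the automatic left-exactness of $\hom_R(M,-)$, unwinding the lifting diagram in the forward direction, and in the reverse direction passing from an arbitrary surjection $g : N \twoheadrightarrow K'$ to the short exact sequence $0 \to \ker g \to N \to K' \to 0$ is exactly the standard argument, and your remark about quantifiers (that ``quotient of $N$'' and ``cokernel of a submodule of $N$'' coincide by the first isomorphism theorem) is the one point that genuinely needs to be said. The paper itself states this proposition without proof, treating it as a known fact from the Sandomierski--Azumaya theory of relative projectivity (cf.\ Wisbauer, Chapter~3), so your argument supplies precisely the reasoning the paper leaves implicit.
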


Note that the middle term $N$ of the exact sequence in
Proposition~\ref{prop:rel-proj-exact} is fixed
\textit{a priori}.
If we let $K = M$, then
Proposition~\ref{prop:rel-proj-exact} has the following
immediate corollary.
\begin{cor}
	Let $M$ and $N$ be $R$-modules.
	If $M$ is $N$-projective then every short exact
	sequence
	\[
		\begin{tikzcd}
			0 \arrow[r] & L \arrow[r] & N \arrow[r, "g"] & M \arrow[r] & 0,
		\end{tikzcd}
	\]
	splits.
\end{cor}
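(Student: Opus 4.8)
The plan is to show directly that if $M$ is $N$-projective, then any short exact sequence $0 \to L \to N \xrightarrow{g} M \to 0$ splits, by producing a section $s : M \to N$ with $g \circ s = \id_M$. This is the most natural route and it reuses only the definition of $N$-projectivity, so it does not even require Proposition~\ref{prop:rel-proj-exact}; but since the corollary is stated as a consequence of that proposition, I will phrase it in a way that goes through the exactness statement as well.

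First I would invoke Proposition~\ref{prop:rel-proj-exact} with $K = M$: given the short exact sequence $0 \to L \to N \xrightarrow{g} M \to 0$, the hypothesis that $M$ is $N$-projective yields the exact sequence of $\hom$-groups
\[
	\begin{tikzcd}
		0 \arrow[r] & \hom_R(M, L) \arrow[r] & \hom_R(M, N) \arrow[r, "g_*"] & \hom_R(M, M) \arrow[r] & 0.
	\end{tikzcd}
\]
The point is the surjectivity of the last map $g_* = \hom_R(M, g)$. Since $\id_M \in \hom_R(M, M)$, surjectivity of $g_*$ gives some $s \in \hom_R(M, N)$ with $g_*(s) = g \circ s = \id_M$. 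Then $s$ is a section of $g$, and a standard diagram-chase (the splitting lemma) shows that $N \cong L \oplus M$ with $g$ identified with the projection onto $M$; equivalently, the original sequence splits.

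Alternatively, and perhaps more transparently, I could bypass Proposition~\ref{prop:rel-proj-exact} and go straight to the definition: apply $N$-projectivity of $M$ to the diagram with $f = \id_M : M \to M$ and the surjection $g : N \twoheadrightarrow M$ (taking $K = M$ there), obtaining $h : M \to N$ with $g \circ h = \id_M$; this $h$ is the desired section. Either way, the argument is essentially immediate once the correct instance of the definition (or proposition) is specialized, so there is no real obstacle here — the only thing to be careful about is making sure the identification $K = M$ is legitimate, which it is since no restrictions are placed on $K$ in the definition of relative projectivity, and then citing the splitting lemma to upgrade "has a section" to "splits as a direct sum."
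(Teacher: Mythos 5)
Your proof is correct and matches the paper's intended approach: the paper states this as an ``immediate corollary'' of Proposition~\ref{prop:rel-proj-exact} by taking $K = M$, which is exactly the specialization you make, and your expansion (surjectivity of $g_*$ gives a preimage of $\id_M$, i.e.\ a section, hence a splitting) fills in the routine details. The alternative direct argument from the definition that you sketch is also fine and is really the same content unwound.
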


\begin{cor}
	Let $M$ and $N$ be $R$-modules with a surjective
	homomorphism $g: N \twoheadrightarrow M$.
	If $M$ is $N$-projective, then $M$ is a direct summand of $N$.	
	Furthermore, if $M$ is $N$-projective and $N$ is projective,
	then $M$ is projective.
\end{cor}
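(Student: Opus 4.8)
The plan is to deduce the statement in two stages, leveraging the first corollary's splitting conclusion. Suppose $M$ is $N$-projective and that we are given a surjection $g\colon N \twoheadrightarrow M$. First I would form the short exact sequence
\[
	\begin{tikzcd}
		0 \arrow[r] & \ker g \arrow[r] & N \arrow[r, "g"] & M \arrow[r] & 0.
	\end{tikzcd}
\]
By the preceding corollary (applied with $L = \ker g$), this sequence splits, so there is a section $s\colon M \to N$ with $g \circ s = \id_M$. Standard homological algebra then gives an internal direct sum decomposition $N \cong s(M) \oplus \ker g$, and since $s$ is injective with $s(M) \cong M$, this exhibits $M$ as a direct summand of $N$. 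That settles the first assertion.

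For the second assertion, assume additionally that $N$ is projective. A direct summand of a projective module is projective: concretely, write $N \cong M' \oplus P$ with $M' \cong M$, and for any surjection $g'\colon K \twoheadrightarrow K'$ and map $f\colon M \to K'$, extend $f$ by zero on $P$ to get $\widetilde{f}\colon N \to K'$, lift through $g'$ using projectivity of $N$ to obtain $\widetilde{h}\colon N \to K$, and restrict $\widetilde{h}$ to the summand $M' \cong M$. This restriction is the desired lift of $f$, so $M$ is projective.

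I do not anticipate a genuine obstacle here: both steps are essentially immediate once the earlier corollary on splitting is in hand, and the only care needed is bookkeeping about the identification $M \cong s(M)$ as a summand of $N$ and the elementary fact that summands of projectives are projective. If anything, the subtle point to state carefully is that the splitting corollary requires $M$ to be $N$-projective with the \emph{specific} surjection being the map $g$ in the hypothesis — but that is exactly what is assumed, so no additional work is required.
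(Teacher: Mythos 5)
Your proof is correct and follows essentially the same route as the paper: apply the preceding splitting corollary to the short exact sequence $0 \to \ker g \to N \to M \to 0$ to exhibit $M$ as a direct summand of $N$, then invoke the standard fact that a direct summand of a projective module is projective. The only cosmetic difference is in the second step: the paper passes through ``$N$ is a direct summand of a free module, hence so is $M$,'' while you verify the lifting property directly by extending by zero on the complementary summand and restricting the lift; both are standard and equivalent.
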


\begin{proof}
	The surjection $g$ induces a short exact sequence
	\[
		\begin{tikzcd}
			0 \arrow[r] & \ker(g) \arrow[r] & N
				\arrow[r, "g"] & M \arrow[r] & 0.
		\end{tikzcd}
	\]
	Since $M$ is $N$-projective, this
	short exact sequence splits so
	\[
		M \oplus \ker(g) \cong N.
	\]
	Therefore, $M$ is a direct summand of $N$.
	
	If $N$ is projective then it is the direct summand of a free module.
	Since $M$ is a direct summand of $N$, $M$ is also the direct
	summand of a free module and therefore projective.
\end{proof}

\begin{rem}
	The converse of the first claim is false. If
	\[
		\begin{tikzcd}
			& N \arrow[d, twoheadrightarrow, "g"] \\
			M \arrow[r, "f"] & K
		\end{tikzcd}
	\]
	is a diagram for which the lifting property fails, then
	\[
		\begin{tikzcd}
			& M \oplus N \arrow[d, twoheadrightarrow, "g'"] \\
			M \arrow[r, "f"] & K
		\end{tikzcd}
	\]
	with $g'(\ell, n) := g(n)$ is also a diagram
	for which the lifting property fails. So for any $R$-module $N$ such
	that $M$ is not $N$-projective, $M$ is also not
	$(M \oplus N)$-projective.
\end{rem}

\begin{definition}
	\hfill
	\begin{enumerate}[(i)]
		\item For an $R$-module $M$, let $C^p(M)$ denote the class of
			$R$-modules $N$ such that $M$ is $N$-projective.
		\item For an $R$-module $N$, let $C_p(N)$ denote the class of
			$R$-modules $M$ such that $M$ is $N$-projective.
	\end{enumerate}
\end{definition}

The following general properties of relative projectivity
are largely due to Azumaya \cite{azumaya} and
Shrikande \cite{shrikhande}.
\begin{prop}[{\cite[Proposition~1.16]{amv}, \cite[Section~18.2]{wisbauer}}]
	\label{prop:rel-proj-properties}
	\hfill
	\begin{enumerate}[(i)]
		\item For an $R$-module $M$, the class $C^p(M)$ is closed under
			taking submodules, finite direct sums, and
			images of $R$-module homomorphisms.
		\item For a finitely generated $R$-module $M$, $C^p(M)$ is also
			closed under arbitrary direct sums.
		\item For an $R$-module $N$, the class $C_p(N)$ is closed under
			taking arbitrary direct sums and direct summands.
	\end{enumerate}
\end{prop}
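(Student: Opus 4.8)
The plan is to verify each closure property directly from the lifting diagram defining relative projectivity, handling the arbitrary-direct-sum cases by reduction to the two-summand case.

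\emph{Part (i).} Closure under images reduces to closure under quotients, since the image of $\phi\colon N\to N''$ is isomorphic to $N/\ker\phi$; and closure under quotients is immediate, because if $M$ is $N$-projective, $\pi\colon N\twoheadrightarrow\bar N$ is surjective, $g\colon\bar N\twoheadrightarrow K$ is surjective, and $f\colon M\to K$ is arbitrary, one lifts $f$ through $g\circ\pi\colon N\twoheadrightarrow K$ by $N$-projectivity and composes the lift with $\pi$. The delicate case is closure under submodules: given $N'\le N$, a surjection $g\colon N'\twoheadrightarrow K$, and $f\colon M\to K$, I would set $L=\ker g$ (so $L\le N'\le N$), form the quotient $q\colon N\twoheadrightarrow\hat K:=N/L$, and identify $K$ with the submodule $N'/L$ of $\hat K$ via the isomorphism $\bar g\colon N'/L\xrightarrow{\sim}K$ induced by $g$. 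Lifting $M\xrightarrow{f}K\cong N'/L\hookrightarrow\hat K$ through $q$ yields $h'\colon M\to N$; since $q(h'(m))$ lies in $N'/L$ and $L\le N'$, each $h'(m)$ lies in $N'$, so $h'$ corestricts to some $h\colon M\to N'$, and unwinding the identification shows $g\circ h=f$. For finite direct sums it suffices to treat $N_1\oplus N_2$ and induct: with $g\colon N_1\oplus N_2\twoheadrightarrow K$ and $f\colon M\to K$ given, put $K_1=g(N_1)$; then $N_2$ surjects onto $K/K_1$, so $M\xrightarrow{f}K\twoheadrightarrow K/K_1$ lifts to some $h_2\colon M\to N_2$, the map $f-g\circ h_2$ then takes values in $K_1=g(N_1)$ and lifts to some $h_1\colon M\to N_1$, and $(h_1,h_2)$ is the required lift.

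\emph{Part (ii).} Here finite generation of $M$ lets one reduce to Part (i). Fix generators $m_1,\dots,m_k$ of $M$, a surjection $g\colon\bigoplus_{i\in I}N_i\twoheadrightarrow K$, and $f\colon M\to K$; choose $x_j\in\bigoplus_{i\in I}N_i$ with $g(x_j)=f(m_j)$ and let $J\subseteq I$ be finite and large enough to contain the supports of all the $x_j$. Then $N':=\bigoplus_{i\in J}N_i$ is a finite direct sum, hence $M$ is $N'$-projective by Part (i); moreover $g$ restricts to a surjection $N'\twoheadrightarrow g(N')$, and $f(M)\subseteq g(N')$ since $f(M)$ is generated by the $f(m_j)=g(x_j)$. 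Lifting the corestriction $M\to g(N')$ through $g|_{N'}$ gives $h\colon M\to N'\hookrightarrow\bigoplus_{i\in I}N_i$ with $g\circ h=f$.

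\emph{Part (iii).} Both assertions follow from the universal property of direct sums. If each $M_i$ is $N$-projective and $g\colon N\twoheadrightarrow K$, $f\colon\bigoplus_i M_i\to K$ are given, lift each $f|_{M_i}$ to $h_i\colon M_i\to N$ and assemble the $h_i$ into $h\colon\bigoplus_i M_i\to N$; then $g\circ h=f$ because it holds on each $M_i$. If $M=M_1\oplus M_2$ is $N$-projective and $f\colon M_1\to K$, $g\colon N\twoheadrightarrow K$ are given, extend $f$ to $M$ by zero on $M_2$, lift it to $h\colon M\to N$, and restrict to $M_1$.

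The only step that is not a formal diagram chase is the submodule case of Part (i); once it and the two-summand case are in hand, the rest follows mechanically.
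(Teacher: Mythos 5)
Your proof is correct. The paper does not supply its own argument for this proposition—it cites \cite[Proposition~1.16]{amv} and \cite[Section~18.2]{wisbauer}—so there is no internal proof to compare against; what you have written is the standard argument from those sources, with the submodule case of (i) (via $\hat K = N/\ker g$ and the observation that the lift lands in $N'$ because $\ker g \le N'$) and the finite-generation reduction in (ii) being exactly the two nontrivial steps, and (iii) a formal consequence of the universal property of direct sums.
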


\subsection{Relatively-injective modules}

Similarly, there is the dual notion of relative injectivity
in the sense of Sandomierski and Azumaya.

\begin{defn}
	Let $M$ and $N$ be $R$-modules.
	$M$ is called \emph{$N$-injective} if and only if
	for every $R$-module $K$, injective $R$-module
	homomorphism $f:K \hookrightarrow N$, and $R$-module
	homomorphism $g:K \rightarrow M$, there is an $R$-module
	homomorphism
	$h:N \rightarrow M$ such that the following diagram commutes:
	\[
		\begin{tikzcd}
			K \arrow[r, hookrightarrow, "f"] \arrow[d, "g"] & N \arrow[dl, dashed, "h"] \\
			M & 
		\end{tikzcd}
	\]
	
	\noindent
	An $R$-module $M$ is called \emph{self-injective} if it is
	$M$-injective.
\end{defn}

An $R$-module $M$ is injective if and only if
$\hom(-, M)$ is an exact functor. Relative injectivity
can also be characterized with a similar exactness
condition. 
\begin{prop}
	\label{prop:rel-inj-exact}
	Let $M$ and $N$ be $R$-modules.
	$M$ is $N$-injective if
	for every exact sequence
	\[
		\begin{tikzcd}
			0 \arrow[r] & K \arrow[r] & N \arrow[r] & L \arrow[r] & 0,
		\end{tikzcd}
	\]
	the corresponding sequence of $R$-module homomorphism groups is also
	exact:
	\[
		\begin{tikzcd}
			0 \arrow[r] & \hom_R(K, M) \arrow[r] & \hom_R(N, M) \arrow[r]
				& \hom_R(L, M) \arrow[r] & 0.
		\end{tikzcd}
	\]
\end{prop}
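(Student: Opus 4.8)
The plan is to deduce $N$-injectivity directly from the exactness hypothesis; in fact only the surjectivity of the last nontrivial map will be needed. So suppose that for every short exact sequence $0 \to K \to N \to L \to 0$ of $R$-modules, the corresponding sequence of $\hom_R(-, M)$-groups is exact.

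First I would unwind the definition of $N$-injectivity: one is handed an $R$-module $K$, an injective homomorphism $f : K \hookrightarrow N$, and an arbitrary homomorphism $g : K \to M$, and the task is to produce $h : N \to M$ with $h \circ f = g$. The key observation is that every such injection $f$ fits into a short exact sequence $0 \to K \xrightarrow{f} N \xrightarrow{p} N/f(K) \to 0$, where $p$ is the canonical projection and $L := N/f(K)$; this is one of the sequences to which the hypothesis applies.

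Next I would apply the hypothesis to this particular short exact sequence. It yields exactness of the associated sequence of homomorphism groups, and in particular the map $f^{*} = \hom_R(f, M) : \hom_R(N, M) \to \hom_R(K, M)$ given by precomposition with $f$ is surjective. Hence there is $h \in \hom_R(N, M)$ with $f^{*}(h) = h \circ f = g$, which is exactly the required extension. Since $K$, $f$, and $g$ were arbitrary, $M$ is $N$-injective.

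The argument is short enough that there is no real obstacle; I would only add two remarks. The full exactness of the Hom sequence is more than is needed here --- surjectivity of the map induced by $f$ suffices --- and this implication is precisely the formal dual of the ``if'' direction of Proposition~\ref{prop:rel-proj-exact}, with the surjection $g : N \twoheadrightarrow K$ there replaced by the projection onto $\coker(f)$ here. One should also note that, in contrast to the projective case, the proposition is asserted in only one direction: the converse (that $N$-injectivity forces every such Hom sequence to be exact) is true as well, but it needs the separate facts that $\hom_R(-, M)$ is left exact and that the lifting property yields surjectivity on the right.
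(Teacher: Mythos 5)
The paper states this proposition without proof (treating it, like its projective counterpart Proposition~\ref{prop:rel-proj-exact}, as a routine fact from the module-theory literature), so there is no proof in the paper to compare against. Your argument is correct and is exactly the natural one: given an injection $f : K \hookrightarrow N$ and $g : K \to M$, form the short exact sequence $0 \to K \xrightarrow{f} N \to N/f(K) \to 0$, apply the hypothesis, and read off surjectivity of $f^{*} = \hom_R(f, M)$ to produce the desired extension $h$.

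Two small remarks. First, you correctly (and silently) read the hypothesized Hom sequence in the contravariant order $0 \to \hom_R(L,M) \to \hom_R(N,M) \to \hom_R(K,M) \to 0$; the order as printed in the paper's statement has the endpoints reversed, which appears to be a typo, and your reading is the sensible one. Second, your closing observation is apt: the statement as printed is asymmetric with Proposition~\ref{prop:rel-proj-exact} in asserting only the ``if'' direction, whereas the converse also holds (left exactness of $\hom_R(-,M)$ plus the extension property giving surjectivity of $f^{*}$), and the ``if and only if'' is almost certainly what was intended.
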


As with Proposition~\ref{prop:rel-proj-exact} for relative
projectivity, the middle term $N$ of the exact
sequence in Proposition~\ref{prop:rel-inj-exact} is fixed
\textit{a priori}.
If we let $K = M$, then
Proposition~\ref{prop:rel-inj-exact} has the following
immediate corollary.
\begin{cor}
	Let $M$ and $N$ be $R$-modules.
	If $M$ is $N$-injective then every short exact
	sequence
	\[
		\begin{tikzcd}
			0 \arrow[r] & M \arrow[r, "f"] & N \arrow[r] & L \arrow[r] & 0,
		\end{tikzcd}
	\]
	splits.
\end{cor}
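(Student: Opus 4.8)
The plan is to obtain this as the promised instantiation of Proposition~\ref{prop:rel-inj-exact} (equivalently, of the defining lifting property of $N$-injectivity) at $K = M$. Given the short exact sequence $0 \to M \xrightarrow{f} N \xrightarrow{\pi} L \to 0$, the map $f$ is by hypothesis an injective $R$-module homomorphism $M \hookrightarrow N$. I would then apply the definition of ``$M$ is $N$-injective'' to the test data $K = M$, the injection $f : K \hookrightarrow N$, and the identity $g = \id_M : M \to M$. This produces an $R$-module homomorphism $h : N \to M$ with $h \circ f = \id_M$, that is, a retraction of $f$.

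Once $h$ is in hand, the remaining step is the standard observation that a retraction of the left-hand map in a short exact sequence yields a splitting: one checks that $N = f(M) \oplus \ker(h)$ and that $\pi$ restricts to an isomorphism $\ker(h) \xrightarrow{\sim} L$, so the sequence splits as claimed. Alternatively, I could quote Proposition~\ref{prop:rel-inj-exact} with $K = M$ directly: the surjectivity of the map $\hom_R(N, M) \to \hom_R(M, M)$ forces $\id_M$ to be the restriction along $f$ of some $h \in \hom_R(N, M)$, which is again the desired retraction. I do not anticipate any genuine obstacle here; the entire content sits in Proposition~\ref{prop:rel-inj-exact} (itself the dual of Proposition~\ref{prop:rel-proj-exact}), and the only mild point of care is bookkeeping the direction of the maps — the submodule $K$ appearing in Proposition~\ref{prop:rel-inj-exact} plays the role of $M$ here — together with the routine homological-algebra fact that a retraction splits a short exact sequence.
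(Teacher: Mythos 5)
Your argument is correct and is exactly what the paper intends: the paper presents this corollary as an ``immediate'' consequence of Proposition~\ref{prop:rel-inj-exact} applied with $K=M$, and your proof spells out precisely that instantiation, producing a retraction $h$ of $f$ and invoking the standard splitting criterion. No discrepancy with the paper's approach.
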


\begin{cor}
	Let $M$ and $N$ be $R$-modules with an injective
	homomorphism $f: M \hookrightarrow N$.
	If $M$ is $N$-injective, then $M$ is a direct summand of $N$.
\end{cor}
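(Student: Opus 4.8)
The plan is to reduce immediately to the splitting corollary just established, dualizing the argument given there for relative projectivity. Starting from the given monomorphism $f \colon M \hookrightarrow N$, I would form the short exact sequence $0 \to M \xrightarrow{f} N \to N/f(M) \to 0$. Since $M$ is $N$-injective, the preceding corollary shows that any such short exact sequence with $M$ on the left splits; hence $N \cong M \oplus N/f(M)$, so $M$ is a direct summand of $N$.

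Equivalently, and without invoking the intermediate corollary, I would apply the definition of $N$-injectivity directly with $K = M$, with the given injection $f \colon M \hookrightarrow N$ in the role of the monomorphism, and with $g = \id_M \colon M \to M$. The lifting property then yields a homomorphism $h \colon N \to M$ with $h \circ f = \id_M$, so $f$ is a split monomorphism and $N = f(M) \oplus \ker(h)$, exhibiting $M$ as a direct summand of $N$.

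I do not expect any genuine obstacle: the statement is the exact mirror of the relatively-projective version, and all the content sits in the definition of $N$-injectivity (together with the already-proved splitting statement). The only point that needs a (trivial) check is that one is allowed to take the auxiliary module $K$ in the definition to be $M$ itself, which is legitimate since $M$ is an $R$-module; with that, the proof is a one-line consequence.
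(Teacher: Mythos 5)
Your proposal is correct and, in its first paragraph, takes essentially the same route as the paper: form the short exact sequence $0 \to M \xrightarrow{f} N \to \coker(f) \to 0$ and split it via the preceding corollary on $N$-injectivity. Your alternative argument (invoking the lifting property directly with $K = M$, $g = \id_M$) is a fine and slightly more self-contained variant, but both amount to the same dualization of the relatively-projective case.
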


\begin{proof}
	The surjection $g$ induces a short exact sequence
	\[
		\begin{tikzcd}
			0 \arrow[r] & M \arrow[r, "f"] & N
				\arrow[r] & \coker(f) \arrow[r] & 0.
		\end{tikzcd}
	\]
	Since $M$ is $N$-projective, this
	short exact sequence splits so
	\[
		M \oplus \coker(f) \cong N.
	\]
	Therefore, $M$ is a direct summand of $N$.
\end{proof}

\begin{definition}
	\hfill
	\begin{enumerate}[(i)]
		\item For an $R$-module $M$, let $C^i(M)$ denote the class of
			$R$-modules $N$ such that $M$ is $N$-injective.
		\item For an $R$-module $N$, let $C_i(N)$ denote the class of
			$R$-modules $M$ such that $M$ is $N$-injective.
	\end{enumerate}
\end{definition}

The following general properties of relative injectivity
are largely due to Azumaya \cite{azumaya} and
Shrikande \cite{shrikhande}.
\begin{prop}[{\cite[Proposition~1.16]{amv}, \cite[Section~16.2]{wisbauer}}]
	\label{prop:rel-inj-properties}
	\hfill
	\begin{enumerate}[(i)]
		\item For an $R$-module $M$, the class $C^i(M)$ is closed under
			taking submodules, arbitrary direct sums, and
			images of $R$-module homomorphisms.
		\item For a finitely generated $R$-module $M$, $C^i(M)$ is also
			closed under arbitrary direct sums.
		\item For an $R$-module $N$, the class $C_i(N)$ is closed under
			taking arbitrary direct products and direct factors.
	\end{enumerate}
\end{prop}

\subsection{Endomorphism rings}

We recall some more definitions from module theory.
Let $M$ be an $R$-module. An $R$-submodule $N$ of $M$ is
called \emph{maximal} if $M/N$ is a simple $R$-module.
The \emph{radical} $\rad(M)$ is defined to be the intersection of all
maximal submodules of $M$. The \emph{socle} $\soc(M)$ is defined to be
the maximal semisimple submodule of $M$, and is equal to the
sum of all simple submodules of $M$.
The \emph{cosocle} (also called the \emph{head} or \emph{top})
$\cosoc(M)$ is defined to be the maximal semisimple subquotient of $M$,
and is equal to $M/\rad(M)$.

A submodule $N$ of $M$ is called \emph{superfluous} if and only if
$K = M$ is the only
submodule of $M$ such that $N + K = M$. The dual notion is
a submodule $N$ of $M$ being \emph{essential}, which occurs if and
only if $K = \set{0}$ is the only submodule of $M$ such that
$N \cap K = \set{0}$. Then $\rad(M)$ can be characterized as
the sum of all superfluous submodules of $M$, while $\soc(M)$
can be characterized as the intersection of all essential submodules
of $M$.

The Jacobson radical of the endomorphism ring of self-projective
and self-injective modules can be characterized in terms of
superfluous and essential modules respectively.

\begin{lemma}[{\cite[Section~22]{wisbauer}}]
	\label{lem:self-proj-rad}
	Let $M$ be an $R$-module.
	\begin{enumerate}[(i)]
		\item If $M$ is self-projective, then
			\[
				\rad\paren{\End_R(M)} = \set{f \in \End_R(M) \,\middle|\, \im(f)
					\textrm{ is a superfluous submodule of } M}.
			\]
		\item If $M$ is self-injective, then
			\[
				\rad\paren{\End_R(M)} = \set{f \in \End_R(M) \,\middle|\, \ker(f)
					\textrm{ is an essential submodule of } M}.
			\]
	\end{enumerate}
\end{lemma}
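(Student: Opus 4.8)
The plan is to prove (i) directly and obtain (ii) by a dual argument, using throughout the characterizations of $\rad(M)$ and $\soc(M)$ in terms of superfluous and essential submodules, together with the relative-projectivity/injectivity machinery from the previous subsections. Write $S = \End_R(M)$ and let $J = \{ f \in S \mid \im(f) \text{ is superfluous in } M\}$ for part (i). First I would check that $J$ is a two-sided ideal: if $\im(f)$ is superfluous and $g \in S$, then $\im(gf) \subseteq g(\im f)$ is an image of a superfluous submodule hence superfluous (a standard fact, following from the definition since $g(\im f) + K = M$ pulls back appropriately), and $\im(fg) \subseteq \im(f)$ is a submodule of a superfluous submodule, hence superfluous; closure under addition follows because the sum of two superfluous submodules is superfluous. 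Next, to see $J \subseteq \rad(S)$, it suffices to show $1 - f$ is invertible in $S$ for every $f \in J$ (using that this holds for all $f$ in a two-sided ideal forces the ideal into the Jacobson radical). Since $\im(f)$ is superfluous and $(1-f)(M) + \im(f) = M$, we get $(1-f)(M) = M$, i.e. $1-f$ is surjective; now self-projectivity of $M$ applied to the diagram with $g = 1-f \colon M \twoheadrightarrow M$ and the identity $M \to M$ produces a splitting $h$ with $(1-f)h = \id_M$, so $1-f$ has a right inverse. Then $h$ is also surjective (as $(1-f)h = \id$), and writing $h = 1 - (f h)$ with $fh \in J$, the same argument gives $h$ a right inverse, whence $h$ is a two-sided inverse of itself's partner and $1-f$ is invertible.

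For the reverse inclusion $\rad(S) \subseteq J$, I would take $f \in \rad(S)$ and show $\im(f)$ is superfluous. Suppose $\im(f) + K = M$ for some submodule $K \subseteq M$. Let $\iota\colon K \hookrightarrow M$ be the inclusion; the map $(f, \iota) \colon M \oplus K \to M$ is surjective. The key point is to produce, using self-projectivity, a map $M \to M \oplus K$ splitting this, or more directly: consider the composite $M \xrightarrow{f} M = \im(f) + K$ and try to lift $\id_M$ against the surjection $\im(f) \oplus K \twoheadrightarrow M$. A cleaner route: since $f \in \rad(S)$, the endomorphism $1 - f$ is an automorphism of $M$, so $\im(1-f) = M$; but $\im(1-f) \subseteq \im(f) + \text{(something)}$ does not immediately help. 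Instead I would argue as in Wisbauer: if $\im(f) + K = M$, apply self-projectivity to lift the identity on $M$ through the epimorphism $M \oplus K \to M$, $(m,k) \mapsto f(m) + k$ wait — rather, use that $M$ is projective relative to itself to get $h \colon M \to M$ with $f h - \id_M$ factoring through $K$; push this through until $K = M$ is forced because otherwise one contradicts $f \in \rad(S)$ via $\id_M - fh$ being invertible while its image lies in $K$. I will write this out carefully, since the precise diagram chase is where the self-projectivity hypothesis does its essential work.

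Part (ii) follows by the dual argument with all arrows reversed: set $J' = \{f \in S \mid \ker(f) \text{ is essential in } M\}$, check $J'$ is a two-sided ideal using that preimages and intersections behave well for essential submodules, show $1 - f$ is injective for $f \in J'$ (since $\ker(1-f) \cap \ker(f) = 0$ and $\ker(f)$ essential force $\ker(1-f) = 0$), then use self-injectivity of $M$ applied to the inclusion $1-f \colon M \hookrightarrow M$ to extend $\id_M$ to a left inverse, giving $1-f$ invertible and hence $J' \subseteq \rad(S)$; conversely, for $f \in \rad(S)$ the automorphism $1-f$ and self-injectivity let one show any submodule $K$ with $K \cap \ker(f) = 0$ must be zero. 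The main obstacle, in both parts, is the reverse inclusion: extracting superfluousness (resp. essentiality) of $\im(f)$ (resp. $\ker(f)$) from $f \in \rad(S)$ requires invoking self-projectivity (resp. self-injectivity) in the right diagram, and I expect that to be the one genuinely non-formal step — everything else is ideal-theoretic bookkeeping and the standard equivalence "a two-sided ideal lies in $\rad$ iff $1 + x$ is a unit for all $x$ in it," specialized to the fact that in $S = \End_R(M)$ a one-sided inverse coming from a split epi/mono is automatically two-sided.
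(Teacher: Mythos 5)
Your forward inclusion $J\subseteq\rad(\End_R(M))$ takes a genuinely different route from the paper's. You first show $J$ is a two-sided ideal and then use the quasi-regularity criterion ($J\subseteq\rad$ iff $1-f$ is a unit for all $f\in J$), whereas the paper shows directly that $\End_R(M)\cdot f$ is a superfluous left ideal of $\End_R(M)$ (starting from $\End_R(M) f + A=\End_R(M)$ and using self-projectivity to produce $1=hg\in A$). Both are standard characterizations of the Jacobson radical and both work; your route requires the extra preliminary that $J$ is a two-sided ideal (which you supply), while the paper's is slightly more direct. However, there is a small error: from $(1-f)h=\id_M$ you conclude ``$h$ is also surjective'' --- that equation shows $h$ is injective and $1-f$ is surjective, not that $h$ is surjective. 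The way to get surjectivity of $h$ is precisely your next clause: from $(1-f)h=\id$ one has $h=1+fh$ (you wrote $1-fh$, a sign slip), with $fh\in J$, so $h$ is itself of the form ``$1$ minus an element of $J$'' and hence surjective by the same superfluous-image argument; then self-projectivity gives $hh'=\id$ with $h'=1-f$, closing the loop.

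For the reverse inclusion $\rad\subseteq J$ you explicitly defer the diagram chase. What you gesture at --- lift $\id$ (equivalently, the quotient map $p\colon M\to M/K$) through the epimorphism induced by $f$, obtaining $h$ with $\im(\id-fh)\subseteq K$, then use invertibility of $1-fh$ (as $fh\in\rad$) to force $K=M$ --- is exactly the paper's argument, so the idea is correct, but as written the proof is incomplete at what you yourself identify as the crux. Writing out that lift (from the surjection $p\circ f\colon M\twoheadrightarrow M/K$, self-projectivity gives $h$ with $p\circ f\circ h=p$, hence $p\circ(1-fh)=0$, hence $\im(1-fh)\subseteq K$, and since $1-fh$ is a unit, $M\subseteq K$) would finish it. Your treatment of part (ii) by duality, including the observation $\ker(1-f)\cap\ker(f)=0$ forcing $\ker(1-f)=0$ when $\ker(f)$ is essential, is sound and matches the paper's ``take the dual'' disposal of (ii).
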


\begin{proof}
	(i): Let $f \in \End_R(M)$ with superfluous image in $M$.
	Suppose $\End_R(M)f + A = \End_R(M)$ for an ideal $A$ of $\End_R(M)$.
	Then there is an $s \in \End_R(M)$ and $g \in A$ such that
	$sf + g = 1$. Then $M = Msf + Mg \subset \im(f) + Mg$, so $Mg = M$
	because $\im(f)$ is superfluous in $M$. Using the self-projectivity
	of $M$ on the diagram
	\[
		\begin{tikzcd}
			& M \arrow[d, twoheadrightarrow, "g"] \\
			M \arrow[r, "1"] \arrow[ur, dashed, "h"] & M,
		\end{tikzcd}
	\]
	we have an $h \in \End_R(M)$ such that $1 = hg \in A$, i.e.~$A =
	\End_R(M)$.
	So an $f \in \End_R(M)$ whose image is superfluous in $M$ actually
	satisfies the property that $\End_R(M) f$ is a superfluous submodule
	of $\End_R(M)$, and such an $f$ is therefore contained in
	$\rad\paren{\End_R(M)}$.
	
	Let $f \in \rad\paren{\End_R(M)}$. Suppose that $K$ is a submodule
	of $M$ such that $\im(f) + K = M$. Then the composition
	$M \xrightarrow{f} M \xrightarrow{p} M/K$ is an epimorphism, so we
	can use the self-projectivity of $M$ to make the following diagram
	commutative:
	\[
		\begin{tikzcd}
		  & M \arrow[d, "f"] \arrow[bend left=60, twoheadrightarrow, swap]{dd}{fp} \\
			& M \arrow[d, twoheadrightarrow, "p"] \\
			M \arrow[r, "p"] \arrow[uur, dashed, "h"] & M/K
		\end{tikzcd}
	\]
	From the commutativity of the diagram, $hfp = p$ so
	$(1-hf)p = 0$. Since $f \in \rad\paren{\End_R(M)}$,
	$1 - hf$ is invertible and hence $p = 0$.
	Therefore, $K = M$ and $\im(f)$ is superfluous in $M$.
	
	(ii): Take the dual of the arguments in the proof of (i).
\end{proof}

Using these characterizations, we can show that endomorphisms
of the cosocle of a self-projective module and endomorphisms
of the socle of a self-injective module lift to
endomorphisms on the original module.
\begin{thm}[{\cite[Section~22]{wisbauer}}]
	\label{thm:self-proj}
	Let $M$ be an $R$-module.
	
	\begin{enumerate}[(i)]
		\item	If $M$ is self-projective and
			$\rad(M)$ is a superfluous submodule of $M$, then
			\[
				\End_R(M)/\rad\paren{\End_R(M)} \cong \End_R\paren{M/\rad(M)}.
			\]
		\item	If $M$ is self-injective and
			$\soc(M)$ is an essential submodule of $M$, then
			\[
				\End_R(M)/\rad\paren{\End_R(M)} \cong \End_R\paren{\soc(M)}.
			\]
	\end{enumerate}
\end{thm}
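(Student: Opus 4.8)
The plan is to construct in each case a canonical ring homomorphism out of $\End_R(M)$, show it is surjective using the relative projectivity/injectivity hypothesis, and identify its kernel with $\rad(\End_R(M))$ via Lemma~\ref{lem:self-proj-rad}; the conclusion then follows from the first isomorphism theorem.

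For (i), I would first observe that $\rad(-)$ is functorial: for any $f \in \End_R(M)$ we have $f(\rad(M)) \subseteq \rad(f(M)) \subseteq \rad(M)$ (if $K$ is a maximal submodule of $M$ then either $f(M) \subseteq K$ or $f^{-1}(K)$ is maximal, so $\rad(M) \subseteq f^{-1}(K)$ in both cases). Hence every endomorphism of $M$ descends to one of $M/\rad(M)$, giving a ring homomorphism $\Phi\colon \End_R(M) \to \End_R(M/\rad(M))$, $f \mapsto \bar f$. To see $\Phi$ is surjective, apply self-projectivity of $M$ to the diagram whose vertical arrow is the projection $p\colon M \twoheadrightarrow M/\rad(M)$ and whose horizontal arrow is $\bar g \circ p\colon M \to M/\rad(M)$: this yields $h \in \End_R(M)$ with $p \circ h = \bar g \circ p$, hence $\bar h = \bar g$ since $p$ is surjective, so $\Phi(h) = \bar g$. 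Finally, $f \in \ker\Phi$ iff $\im(f) \subseteq \rad(M)$; by Lemma~\ref{lem:self-proj-rad}(i), $\rad(\End_R(M))$ is exactly the set of $f$ with $\im(f)$ superfluous in $M$. A superfluous submodule is contained in $\rad(M)$ (the radical being the sum of superfluous submodules), and conversely a submodule of a superfluous submodule is superfluous, so — \emph{using the hypothesis that $\rad(M)$ is superfluous} — $\im(f) \subseteq \rad(M)$ forces $\im(f)$ superfluous. Thus $\ker\Phi = \rad(\End_R(M))$.

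For (ii), I would dualize. Functoriality of the socle ($f(\soc(M))$ is a semisimple submodule of $M$, hence $f(\soc(M)) \subseteq \soc(M)$) makes restriction a ring homomorphism $\Psi\colon \End_R(M) \to \End_R(\soc(M))$, $f \mapsto f|_{\soc(M)}$. Surjectivity follows from self-injectivity of $M$ applied to the diagram with monomorphism the inclusion $\iota\colon \soc(M) \hookrightarrow M$ and other map $\iota \circ g$ for $g \in \End_R(\soc(M))$: this extends $g$ to some $h \in \End_R(M)$ with $h|_{\soc(M)} = g$. And $f \in \ker\Psi$ iff $\soc(M) \subseteq \ker(f)$; by Lemma~\ref{lem:self-proj-rad}(ii), $\rad(\End_R(M))$ consists of $f$ with $\ker(f)$ essential in $M$. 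Since $\soc(M)$ is the intersection of all essential submodules, $\ker(f)$ essential implies $\soc(M) \subseteq \ker(f)$; conversely, a submodule containing an essential submodule is essential, so — \emph{using the hypothesis that $\soc(M)$ is essential} — $\soc(M) \subseteq \ker(f)$ forces $\ker(f)$ essential. Hence $\ker\Psi = \rad(\End_R(M))$.

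The point requiring the most care is this kernel identification, since it is the only place the hypotheses "$\rad(M)$ superfluous" and "$\soc(M)$ essential" enter; without them $\ker\Phi$ (resp. $\ker\Psi$) could be strictly smaller than $\rad(\End_R(M))$ and one would only get a surjection. The two small auxiliary facts needed — that a submodule of a superfluous submodule is superfluous, and that a submodule containing an essential submodule is essential — are immediate from the definitions (if $N \subseteq L$ with $L$ superfluous and $N + K = M$, then $L + K = M$, so $K = M$; dually for the essential case). The surjectivity statements, by contrast, are just the defining lifting/extension properties applied to the canonical projection $M \twoheadrightarrow M/\rad(M)$ and the canonical inclusion $\soc(M) \hookrightarrow M$.
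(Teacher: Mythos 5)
Your proposal is correct and follows essentially the same route as the paper: the paper applies $\hom_R(M,-)$ to the short exact sequence $0 \to \rad(M) \to M \to M/\rad(M) \to 0$ and identifies the first and last terms, which is precisely the surjectivity-plus-kernel argument you spell out explicitly via the canonical map $\Phi$ (and dually $\Psi$) and Lemma~\ref{lem:self-proj-rad}. The only cosmetic difference is that you phrase the surjectivity via the raw lifting/extension property while the paper invokes the exactness characterization of self-projectivity; the content is identical.
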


\begin{proof}	
	(i): From the exact sequence
	\[
		\begin{tikzcd}
			0 \arrow[r] & \rad(M) \arrow[r] & M \arrow[r] &
				M/\rad(M) \arrow[r] & 0,
		\end{tikzcd}
	\]
	the following is also exact by the self-projectivity
	of $M$:
	\[
		\begin{tikzcd}
			0 \arrow[r] & \hom_R\paren{M, \rad(M)} \arrow[r] &
				\hom_R(M, M) \arrow[r] & \hom_R\paren{M, M/\rad(M)} \arrow[r]
				& 0.
		\end{tikzcd}
	\]
	Notice that $\hom_R\paren{M, M/\rad(M)} \cong \End_R\paren{M/\rad(M)}$
	since $\rad(M)$ is necessarily in the kernel of such an
	$R$-homomorphism.
	Since $\rad(M)$ is the sum of all superfluous submodules of $M$ and
	furthermore is itself a superfluous submodule of $M$ (by assumption),
	\[
		\hom_R\paren{M, \rad(M)} = \{f \in \End_R(M) \mid \im(f)
			\textrm{ is a superfluous submodule of } M\}.
	\]
	By Lemma~\ref{lem:self-proj-rad}(i), this is equal to
	$\rad\paren{\End_R(M)}$.
	
	Therefore, we have the exact sequence
	\[
		\begin{tikzcd}
			0 \arrow[r] & \rad\paren{\End_R(M)} \arrow[r] &
				\End_R(M) \arrow[r] & \End_R\paren{M/\rad(M)} \arrow[r]
				& 0,
		\end{tikzcd}
	\]
	so $\End_R\paren{M/\rad(M)} \cong \End_R(M)/\rad\paren{\End_R(M)}$.
	
	(ii): Consider an endomorphism $f: M \rightarrow M$ such that
	$\ker(f)$ is an essential submodule of $M$. Such an endomorphism
	factors through $M/\soc(M)$ since $\soc(M)$ is
	the intersection of all essential submodules of $M$.	
	By Lemma~\ref{lem:self-proj-rad}(ii),
	\[
		\rad\paren{\End_R(M)}	= \set{f \in \End_R(M) \,\middle|\,
			f \textrm{ is an essential submodule of } M }
			\subset \hom_R \paren{M/\soc(M), M}.
	\]
	Furthermore, any element $g \in \hom_R \paren{M/\soc(M), M}$ lifts
	to an endomorphism $g' \in \End_R(M)$ such that
	$\soc(M) \subset \ker(g')$. But $\soc(M)$ is an essential submodule
	of $M$ by assumption, so $\ker(g')$ is also an essential submodule
	of $M$. Then $g' \in \rad\paren{\End_R(M)}$ by
	Lemma~\ref{lem:self-proj-rad}(ii).
\end{proof}

\subsection{Exact functors}
\label{subsec:exact}
Finally, we briefly consider how exact functors can preserve or
transfer relative-projectivity. This will be used later for the
study of induction and restriction for representations
of finite groups in Section~\ref{subsec:ind-res}.

\begin{prop}
	\label{prop:rel-proj-functorial}
	Let $R_1$ and $R_2$ be two rings. Suppose that
	$\calf: \rmod{R_1} \rightarrow \rmod{R_2}$
	and $\calg: \rmod{R_2} \rightarrow \rmod{R_1}$
	are an exact adjoint pair $\calf \dashv \calg$.
	\begin{enumerate}[(i)]
		\item For $M \in \rmod{R_1}$ and
			$N \in \rmod{R_2}$,
			if $M$ is $\calg\paren{N}$-projective,
			then $\calf\paren{M}$ is $N$-projective.
		\item For $M \in \rmod{R_2}$ and
			$N \in \rmod{R_1}$,
			if $M$ is $\calf\paren{N}$-injective,
			then $\calg\paren{M}$ is $N$-injective.
	\end{enumerate}
\end{prop}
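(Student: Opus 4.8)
The plan is to push each lifting diagram across the adjunction, solve the resulting diagram on the other side using the relative-projectivity (resp.\ relative-injectivity) hypothesis, and pull the solution back, with naturality of the adjunction doing the verification that the pulled-back map solves the original problem. Write $\Phi_{X,Y} : \hom_{R_2}(\calf(X), Y) \xrightarrow{\sim} \hom_{R_1}(X, \calg(Y))$ for the adjunction bijection of $\calf \dashv \calg$; only its naturality in $X$ and $Y$ will be used. The exactness hypothesis will be invoked in exactly one spot per part: to know that $\calg$ carries the epimorphism in (i) to an epimorphism, and that $\calf$ carries the monomorphism in (ii) to a monomorphism --- neither of which follows merely from $\calg$ being a right adjoint or $\calf$ a left adjoint.

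For (i), assume $M$ is $\calg(N)$-projective, and take an $R_2$-module $K$, a map $f : \calf(M) \to K$, and an epimorphism $g : N \twoheadrightarrow K$. Since $\calg$ is exact, $\calg(g) : \calg(N) \twoheadrightarrow \calg(K)$ is an epimorphism, so $\calg(N)$-projectivity of $M$ applied to $\Phi_{M,K}(f) : M \to \calg(K)$ and $\calg(g)$ yields some $\tilde h : M \to \calg(N)$ with $\calg(g) \circ \tilde h = \Phi_{M,K}(f)$. Put $h := \Phi_{M,N}^{-1}(\tilde h) : \calf(M) \to N$. Naturality of $\Phi$ in the second variable applied to $g$ gives $\Phi_{M,K}(g \circ h) = \calg(g) \circ \Phi_{M,N}(h) = \calg(g) \circ \tilde h = \Phi_{M,K}(f)$, hence $g \circ h = f$ because $\Phi_{M,K}$ is injective. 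So $\calf(M)$ is $N$-projective.

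Part (ii) is the formal dual. Assume $M$ is $\calf(N)$-injective, and take an $R_1$-module $K$, a monomorphism $f : K \hookrightarrow N$, and a map $g : K \to \calg(M)$. Since $\calf$ is exact, $\calf(f) : \calf(K) \hookrightarrow \calf(N)$ is a monomorphism, so $\calf(N)$-injectivity of $M$ applied to $\Phi_{K,M}^{-1}(g) : \calf(K) \to M$ and $\calf(f)$ yields some $\tilde h : \calf(N) \to M$ with $\tilde h \circ \calf(f) = \Phi_{K,M}^{-1}(g)$. Put $h := \Phi_{N,M}(\tilde h) : N \to \calg(M)$. Naturality of $\Phi$ in the first variable applied to $f$ gives $h \circ f = \Phi_{N,M}(\tilde h) \circ f = \Phi_{K,M}(\tilde h \circ \calf(f)) = \Phi_{K,M}\paren{\Phi_{K,M}^{-1}(g)} = g$. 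So $\calg(M)$ is $N$-injective.

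I do not expect a genuine obstacle: the argument is entirely the interplay of the adjunction bijection with its naturality, and the only thing to watch is which of the two $\hom$-variables the naturality square is being applied in. (Alternatively, one can argue through the exact-sequence characterizations of Propositions~\ref{prop:rel-proj-exact} and~\ref{prop:rel-inj-exact}, noting that $\calg$, resp.\ $\calf$, being exact sends a short exact sequence with $N$ in the middle to one with $\calg(N)$, resp.\ $\calf(N)$, in the middle, and then transporting $\hom$'s across $\Phi$.)
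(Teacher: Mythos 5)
Your proof is correct, and it takes a mildly different route from the paper's. The paper argues at the level of exact sequences: it applies $\calg$ to an arbitrary short exact sequence with $N$ in the middle, invokes $\calg(N)$-projectivity of $M$ to get exactness of the resulting $\hom_{R_1}(M,-)$ sequence, and then transports across the adjunction to get exactness of the $\hom_{R_2}(\calf(M),-)$ sequence, i.e.\ the characterization of Proposition~\ref{prop:rel-proj-exact} (dually for (ii) via Proposition~\ref{prop:rel-inj-exact}); this is precisely the ``alternative'' argument you sketch in your final paragraph. You instead solve a single lifting diagram by pushing it through the adjunction bijection $\Phi$, solving on the far side, and pulling back, with naturality of $\Phi$ supplying the verification. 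The two arguments have identical essential content (adjunction isomorphism $+$ exactness of one of the two functors), but yours is more granular and has two small advantages: it makes explicit that only the exactness of $\calg$ is used in (i) and only the exactness of $\calf$ in (ii) (the paper's hypothesis of a fully exact adjoint pair is therefore stronger than what each half needs); and it avoids the paper's slightly loose closing line ``$\hom_{R_2}(\calf(M),-)$ is exact,'' which should really say that exactness holds for short exact sequences with $N$ in the middle, which is exactly what Proposition~\ref{prop:rel-proj-exact} requires.
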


\begin{proof}
	(i): Suppose we have an exact sequence of $R_2$-modules
	\[
		\begin{tikzcd}
			0 \arrow[r] & L \arrow[r] & N \arrow[r] &
				K \arrow[r] & 0.
		\end{tikzcd}
	\]
	By the exactness of $\calg$, this gives
	the exact sequence
	\[
		\begin{tikzcd}
			0 \arrow[r] & \calg \paren{L} \arrow[r] &
				\calg \paren{N} \arrow[r] & \calg \paren{K} \arrow[r] & 0.
		\end{tikzcd}
	\]
	The $\calg \paren{N}$-projectivity of $M$ gives the exactness of
	\[
		\begin{tikzcd}
			0 \arrow[r] &
				\hom_{R_1} \paren{M, \calg \paren{L}} \arrow[r] &
				\hom_{R_1} \paren{M, \calg \paren{N}} \arrow[r] &
				\hom_{R_1} \paren{M, \calg \paren{K}} \arrow[r] & 0.
		\end{tikzcd}
	\]
	The adjunction between $\calf$ and $\calg$ implies the exactness of
	\[
		\begin{tikzcd}
			0 \arrow[r] &
				\hom_{R_2} \paren{\calf{\paren{M}}, L} \arrow[r] &
				\hom_{R_2} \paren{\calf{\paren{M}}, N} \arrow[r] &
				\hom_{R_2} \paren{\calf{\paren{M}}, K} \arrow[r] & 0.
		\end{tikzcd}
	\]
	Therefore, $\hom_{R_2} \paren{\calf{\paren{M}}, -}$ is exact.
	
	(ii): Suppose we have an exact sequence of $R_1$-modules
	\[
		\begin{tikzcd}
			0 \arrow[r] & L \arrow[r] & N \arrow[r] &
				K \arrow[r] & 0.
		\end{tikzcd}
	\]
	By the exactness of $\calf$, this gives
	the exact sequence
	\[
		\begin{tikzcd}
			0 \arrow[r] & \calf \paren{L} \arrow[r] &
				\calf \paren{N} \arrow[r] & \calf \paren{K} \arrow[r] & 0.
		\end{tikzcd}
	\]
	The $\calf \paren{N}$-projectivity of $M$ gives the exactness of
	\[
		\begin{tikzcd}
			0 \arrow[r] &
				\hom_{R_2} \paren{\calf \paren{L}, M} \arrow[r] &
				\hom_{R_2} \paren{\calf \paren{N}, M} \arrow[r] &
				\hom_{R_2} \paren{\calf \paren{K}, M} \arrow[r] & 0.
		\end{tikzcd}
	\]
	The adjunction between $\calf$ and $\calg$ implies the exactness of
	\[
		\begin{tikzcd}
			0 \arrow[r] &
				\hom_{R_1} \paren{L, \calg{\paren{M}}} \arrow[r] &
				\hom_{R_1} \paren{N, \calg{\paren{M}}} \arrow[r] &
				\hom_{R_1} \paren{K, \calg{\paren{M}}} \arrow[r] & 0.
		\end{tikzcd}
	\]
	Therefore, $\hom_{R_1} \paren{-, \calg{\paren{M}}}$ is exact.
\end{proof}


\section{General criteria for multiplicity-freeness}
\label{sec:multiplicity-freeness}

Using the endomorphism ring properties of self-projective
and self-injective modules
from Section~\ref{sec:rel-proj},
we can modify the proof of the classical Gelfand pair criterion
(Proposition~\ref{prop:classical}) to prove a
multiplicity-freeness theorem in any characteristic. Since the proof
still relies on Schur's lemma for endomorphism rings,
$R$ is an algebra over an algebraically closed base field $F$ in this
section.

\begin{thm}[Multiplicity-freeness, general version]
	\label{thm:multiplicity-free-2}
	Let $F$ be an algebraically closed field,
	$R$ be an algebra over $F$, and $M$ be a
	finitely-generated $R$-module.
	\begin{enumerate}[(i)]
		\item Suppose $M$ is a self-projective $R$-module. If
			$\End_R(M)$ is commutative, then
			\[
				\dim_F \hom_R\paren{M, N} \leq 1,
			\]
			for all simple $R$-modules $N$.
		\item Suppose $M$ is a self-injective and finitely-cogenerated
			$R$-module. If $\End_R(M)$ is commutative, then
			\[
				\dim_F \hom_R\paren{N, M} \leq 1,
			\]
			for all simple $R$-modules $N$.
	\end{enumerate}
\end{thm}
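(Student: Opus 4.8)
The plan is to mimic the classical Gelfand argument, but with self-projectivity (resp. self-injectivity) doing the work that semisimplicity does in the classical proof. The point of the classical proof is: a representation is multiplicity-free iff its endomorphism ring is commutative, and this equivalence comes from decomposing the module into isotypic components. Here we cannot decompose, but the endomorphism-ring characterizations of Section~\ref{sec:rel-proj} (specifically Theorem~\ref{thm:self-proj}) let us transfer commutativity of $\End_R(M)$ to commutativity of an endomorphism ring of an \emph{honestly semisimple} module, where we \emph{can} run the classical argument.

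\begin{proof}
	(i): Let $N$ be a simple $R$-module. We want $\dim_F \hom_R(M, N) \leq 1$. Any nonzero $R$-homomorphism $M \to N$ is surjective (as $N$ is simple), hence kills $\rad(M)$ (as $N$ is semisimple), so it factors through $M/\rad(M)$; thus $\hom_R(M, N) \cong \hom_R(M/\rad(M), N)$. Write $\overline{M} := M/\rad(M) = \cosoc(M)$, which is semisimple. Since $M$ is finitely generated, $\rad(M)$ is a superfluous submodule of $M$, so Theorem~\ref{thm:self-proj}(i) applies: $\End_R(M)/\rad(\End_R(M)) \cong \End_R(\overline{M})$. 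As $\End_R(M)$ is commutative, so is its quotient $\End_R(\overline{M})$. Now decompose the semisimple module $\overline{M} \cong \bigoplus_{i} N_i^{\oplus m_i}$ into distinct simple factors $N_i$ with multiplicities $m_i$. Since $F$ is algebraically closed and each $N_i$ is absolutely simple ($\End_R(N_i) = F$ by Schur's lemma), we get $\End_R(\overline{M}) \cong \prod_i \mat_{m_i}(F)$. Commutativity of this product forces every $m_i \leq 1$. Hence for the simple module $N$, $\dim_F \hom_R(\overline{M}, N) = m_i \leq 1$ if $N \cong N_i$ for some $i$, and $= 0$ otherwise; in all cases $\dim_F \hom_R(M, N) = \dim_F \hom_R(\overline{M}, N) \leq 1$.

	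(ii): Dualize. Let $N$ be simple; any nonzero $R$-homomorphism $N \to M$ is injective, so its image is a simple submodule of $M$, hence lands in $\soc(M)$; thus $\hom_R(N, M) \cong \hom_R(N, \soc(M))$. Since $M$ is finitely cogenerated, $\soc(M)$ is an essential submodule of $M$, so Theorem~\ref{thm:self-proj}(ii) gives $\End_R(M)/\rad(\End_R(M)) \cong \End_R(\soc(M))$, which is commutative. Decompose the semisimple module $\soc(M) \cong \bigoplus_i N_i^{\oplus n_i}$ into distinct absolutely simple factors; again $\End_R(\soc(M)) \cong \prod_i \mat_{n_i}(F)$, and commutativity forces $n_i \leq 1$. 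Therefore $\dim_F \hom_R(N, M) = \dim_F \hom_R(N, \soc(M)) \leq 1$ for every simple $N$.
\end{proof}

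The one subtlety worth flagging is the hypothesis bookkeeping: part~(i) needs ``$\rad(M)$ superfluous'', which for finitely generated modules is automatic (Nakayama), and part~(ii) needs ``$\soc(M)$ essential'', which is exactly what ``finitely cogenerated'' buys us --- so the statement's hypotheses are precisely tuned to feed Theorem~\ref{thm:self-proj}. The genuinely load-bearing inputs are Theorem~\ref{thm:self-proj} (to replace $M$ by a semisimple module without changing the $\hom$-spaces into/out of simples) and algebraic closedness (so that a commutative semisimple $F$-algebra has all Wedderburn blocks of size one); neither is an obstacle so much as a hypothesis one must remember to invoke. I expect the main conceptual content is simply recognizing that the classical ``commutative $\Rightarrow$ no repeated blocks'' argument survives verbatim once Theorem~\ref{thm:self-proj} has been proved, so the real work of the section is already behind us in Section~\ref{sec:rel-proj}.
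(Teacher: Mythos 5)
Your proof is correct and follows essentially the same route as the paper: feed the superfluous/essential hypotheses into Theorem~\ref{thm:self-proj} to reduce to the semisimple module $\cosoc(M)$ (resp.\ $\soc(M)$), then run the classical Wedderburn/Schur argument there. You are in fact slightly cleaner than the paper in isolating where algebraic closedness enters (Schur's lemma forcing $\End_R(N_i)=F$, not merely that $D_i$ is commutative), but there is no substantive difference in approach.
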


\begin{proof}
	If $M$ is a finitely-generated module, then its
	radical $\rad(M)$ is a superfluous submodule of $M$
	(\cite[Section~21.6]{wisbauer}).
	So if $M$ is both finitely-generated and self-projective,
	then it satisfies the conditions of Theorem~\ref{thm:self-proj}(i).
	Then by Theorem~\ref{thm:self-proj}(i),
	we obtain the isomorphism
	\[
		\End_R(M) / \rad\paren{\End_R(M)} \cong
		\End_R\paren{M / \rad (M)}.
	\]
	
	\noindent
	In particular, $M / \rad\paren{M}$ is semisimple
	and finitely-generated (since $M$ is finitely-generated).
	
	Similarly, if $M$ is a finitely-cogenerated module,
	then its socle $\soc(M)$ is an essential submodule of $M$.
	So if $M$ is both finitely-cogenerated and self-injective,
	then it satisfies the conditions of Theorem~\ref{thm:self-proj}(ii).
	Then by Theorem~\ref{thm:self-proj}(ii),
	we obtain the isomorphism
	\[
		\End_R(M) / \rad\paren{\End_R(M)} \cong
		\End_R\paren{\soc (M)}.
	\]
	\noindent
	In particular, $\soc(M)$ is semisimple and
	finitely-generated (since $M$ is
	finitely-cogenerated \cite[Section~21.3]{wisbauer}).
	
	Let $L = M / \rad\paren{M}$ if $M$ is self-projective,
	and otherwise let $L = \soc(M)$ if $M$ is
	finitely-cogenerated and self-injective. In both cases,
	$L$ is semisimple and finitely-generated, so
	we may write $L = \bigoplus d_i N_i$
	as a direct sum of distinct simple $R$-modules $N_i$ with
	positive integer multiplicities $d_i$.
	In both cases, we have that
	\[
		\End_R(M) / \rad\paren{\End_R(M)} \cong \End_R\paren{L} \cong
		\bigoplus \mat (d_i, D_i),
	\]
	for division algebras $D_i$.
	Since	$\End_R(M)$ is commutative,
	so is $\bigoplus \mat (d_i, F)$.
	Therefore, the $D_i$ are actually fields,
	all $d_i \leq 1$, and
	the semisimple $R$-module
	$L$ is multiplicity-free.
	Then if $L = M/\rad(M)$,
	\[
		\dim_F \hom_R\paren{M, N}
			= \dim_F \hom_R\paren{M/\rad(M), N}
			\leq 1,
	\]
	for all simple $R$-modules $N$.
	If $L = \soc(M)$, then
	\[
		\dim_F \hom_R\paren{N, M}
			= \dim_F \hom_R\paren{N, \soc(M)}
			\leq 1
	\]
	for all simple $R$-modules $N$.
\end{proof}

\begin{rem}
	In the self-injective case of
	Theorem~\ref{thm:multiplicity-free-2}(ii), the $R$-module $M$ is
	required to be \textit{both} finitely-generated (for Schur's lemma)
	and finitely-cogenerated (for $\soc(M)$ to be an essential submodule
	of $M$).
	If $R$ is Artinian (e.g. any group ring of a finite group),
	then we may ignore the finite-cogeneration condition
	because all finitely-generated
	$R$-modules are finitely-cogenerated.
\end{rem}

For applications of Theorem~\ref{thm:multiplicity-free-2} to group
representations, $R$ is the group ring $F[G]$ of a group $G$.
The formulation of Theorem~\ref{thm:multiplicity-free} follows
directly from Theorem~\ref{thm:multiplicity-free-2}(ii) by setting
$M$ to be a group representation $\rho$ of $G$.

For applications of Theorem~\ref{thm:multiplicity-free-2} to group
representations,
we specialize to the group ring $R = F[G]$
of a finitely-generated group $G$ with $M = \rho$
a finite-dimensional representation.
Theorem~\ref{thm:multiplicity-free-2}(ii) can then be directly
reformulated for group representations, but we can strengthen the
result in this setting by removing the
finite-cogeneration condition through an application of duality to
Theorem~\ref{thm:multiplicity-free-2}(i).

\begin{thm}[Multiplicity-freeness, group representation version]
	\label{thm:multiplicity-free-3}
	Let $F$ be an algebraically closed field,
	$G$ be a finitely-generated group,
	and $\rho$ be a finite-dimensional representation of $G$.
	For all irreducible representations $\pi$ of $G$,
	\[
		\dim_F \hom_G\paren{\pi, \rho} \leq 1,
	\]
	if both of the following conditions are satisfied:
	\begin{enumerate}[(i)]
		\item $\End_G\paren{\rho}$ is commutative;
		\item $\rho$ is self-injective.
	\end{enumerate}
\end{thm}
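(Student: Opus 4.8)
The plan is to reduce Theorem~\ref{thm:multiplicity-free-3} to the self-projective case Theorem~\ref{thm:multiplicity-free-2}(i) by passing to the contragredient $\rho^\vee = \hom_F(\rho, F)$, the linear dual equipped with the usual twisted $G$-action. Since $\rho$ is finite-dimensional over $F$, the functor $(-)^\vee$ is an exact contravariant self-equivalence of the category of finite-dimensional $F[G]$-modules, with $\rho^{\vee\vee}\cong\rho$ canonically; moreover every finite-dimensional $F[G]$-module has finite length over $F[G]$, so in particular $\rho^\vee$ is a finitely-generated $F[G]$-module.

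First I would transport the two hypotheses through duality. Because $(-)^\vee$ reverses composition, $\End_G(\rho^\vee)\cong\End_G(\rho)^{\mathrm{op}}$, so commutativity of $\End_G(\rho)$ gives commutativity of $\End_G(\rho^\vee)$. For the injectivity/projectivity interchange, I would first observe that when the ``test module'' $N$ is finite-dimensional, the auxiliary modules $K$ appearing in the definitions of $N$-projective and $N$-injective are necessarily finite-dimensional (a quotient, resp.\ a submodule, of $N$), so both lifting conditions live entirely inside the category of finite-dimensional representations. Dualizing a lifting diagram witnessing ``$\rho^\vee$ is $\rho^\vee$-projective'' produces exactly a lifting diagram witnessing ``$\rho$ is $\rho$-injective,'' and conversely; hence self-injectivity of $\rho$ is equivalent to self-projectivity of $\rho^\vee$.

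Next I would apply Theorem~\ref{thm:multiplicity-free-2}(i) with $R=F[G]$ and $M=\rho^\vee$: as $\rho^\vee$ is finitely-generated, self-projective, and has commutative endomorphism ring, we obtain $\dim_F\hom_G(\rho^\vee, N')\le 1$ for every simple $F[G]$-module $N'$. To translate back, fix an irreducible representation $\pi$ of $G$. If $\hom_G(\pi,\rho)=0$ there is nothing to prove; otherwise any nonzero map $\pi\to\rho$ is injective by simplicity of $\pi$, so $\pi$ embeds into the finite-dimensional $\rho$ and is itself finite-dimensional, whence $\pi^\vee$ is a finite-dimensional simple $F[G]$-module. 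The duality isomorphism then gives $\hom_G(\pi,\rho)\cong\hom_G(\rho^\vee,\pi^\vee)$, so $\dim_F\hom_G(\pi,\rho)=\dim_F\hom_G(\rho^\vee,\pi^\vee)\le 1$.

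I expect the only real subtlety — and the point to spell out carefully — to be the injectivity/projectivity interchange: one must justify that for a finite-dimensional $N$ the relative lifting conditions only quantify over finite-dimensional auxiliary modules, so that the contravariant equivalence on finite-dimensional representations genuinely swaps ``self-injective'' and ``self-projective.'' (Alternatively, one can apply Theorem~\ref{thm:multiplicity-free-2}(ii) directly, using that a finite-dimensional representation has finite length and is therefore finitely cogenerated.) Everything else is formal manipulation of the duality functor together with the cited theorem.
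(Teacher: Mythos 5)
Your proof is correct and uses essentially the same approach as the paper: dualize to reduce to the self-projective case of Theorem~\ref{thm:multiplicity-free-2}(i), then translate back via the natural isomorphism $\hom_G(\pi,\rho)\cong\hom_G(\rho^\vee,\pi^\vee)$, with the minor extra care you supply (the $\mathrm{op}$ on endomorphism rings, the finite-dimensionality of the auxiliary modules in the lifting conditions, and the trivial dismissal of infinite-dimensional $\pi$). Your parenthetical alternative --- applying Theorem~\ref{thm:multiplicity-free-2}(ii) directly, since a finite-dimensional $F[G]$-module has finite length and is therefore automatically finitely cogenerated --- is also valid and is arguably cleaner than the duality detour that both you and the paper take as the main route.
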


\begin{proof}
	Since $\rho$ is self-injective,
	the representation $\widetilde{\rho}$ is
	self-projective by duality.
	Furthermore, $\End_G\big(\widetilde{\rho}\big) \cong
	\End_G\big(\rho\big)$ since $\rho$
	is finite-dimensional. 
	Then by Theorem~\ref{thm:multiplicity-free-2}(i)
	with $R = F[G]$ and $M = \widetilde{\rho}$,
	\[
		\dim_F \hom_{F[G]}\paren{\widetilde{\rho}, N}
		\leq 1,
	\]
	for all simple $F[G]$-modules $N$.
	Viewing the simple $F[G]$-modules $N$ as duals of
	irreducible representations of $G$
	(using the fact that the dual of a finite-dimensional representation
	is irreducible
	if and only if the original representation is irreducible),
	this means that
	\[
		\dim_F \hom_{F[G]}\paren{\widetilde{\rho}, \widetilde{\pi}} \leq 1,
	\]
	for all irreducible representations $\pi$ of $G$.
	By the natural isomorphism
	\[
		\hom_{F[G]}\paren{\widetilde{\rho}, \widetilde{\pi}}
		\cong \hom_{F[G]}\paren{\pi, \rho},
	\]
	we conclude that
	\[
		\dim_F \hom_{F[G]}\paren{\pi, \rho} \leq 1,
	\]
	for all irreducible representations $\pi$ of $G$ over $F$.
\end{proof}

Unlike Gelfand's criterion in the characteristic zero setting,
our proof of Theorem~\ref{thm:multiplicity-free-2} (and therefore
Theorem~\ref{thm:multiplicity-free-3})
does not provide a converse theorem because in the isomorphism,
\[
	\End_R(M) / \rad\paren{\End_R(M)} \cong
		\bigoplus \mat (d_i, F),
\]
the commutativity of the right-hand side
does not necessarily imply the commutativity of $\End_R(M)$.
In fact, the converse is false as illustrated
by the following non-example.

\begin{non-example}
	\label{nonex:non-commutative}
	Let $F$ be a field of characteristic $p$,
	$G$ be a finite non-abelian $p$-group, $R = F[G]$, and
	$M = F[G]$.
	Then $\End_R(M) = F[G]$, which is non-commutative.
	But the only irreducible representation of $G$ over $F$
	is the trivial representation, so $M$ still satisfies the
	multiplicity-one property despite failing the commutativity
	condition of Theorem~\ref{thm:multiplicity-free-2}.
\end{non-example}

The conditions of Theorem~\ref{thm:multiplicity-free-2} are
also somewhat necessary, as one can otherwise construct
a representation with commutative Hecke algebra but
higher multiplicity if it is not self-injective.
\begin{non-example}
	\label{nonex:higher-multiplicity}
	Let $\pi$ be an irreducible
	representation of a group $G$ over a field $F$ with two non-split
	non-isomorphic extensions $\sigma_1$ and $\sigma_2$:
	\[
		\begin{tikzcd}
			0 \arrow[r] & \pi \arrow[r] & \sigma_1 \arrow[r] & \tau_1 \arrow[r] & 0, \\
			0 \arrow[r] & \pi \arrow[r] & \sigma_2 \arrow[r] & \tau_2 \arrow[r] & 0,
		\end{tikzcd}
	\]
	where $\tau_1$ and $\tau_2$ are irreducible representations
	of $G$ over $F$ different from $\pi$. Such extensions exist,
	for example, for groups in positive characteristic
	or even for $p$-adic groups in characteristic $0$.
	Then there is the extension $\rho := \sigma_1 \oplus \sigma_2$ of
	$\pi \oplus \pi$:
	
	\[
		\begin{tikzcd}
			0 \arrow[r] & \pi \oplus \pi \arrow[r] &
				\rho = \sigma_1 \oplus \sigma_2 \arrow[r] &
				\tau_1 \oplus \tau_2 \arrow[r] & 0.
		\end{tikzcd}
	\]
	Observe that for $i = 1$ or $2$,
	any $\phi \in \End_G(\rho)$
	sends $\sigma_i$ to itself,
	and $\End(\sigma_i)$ sends $\pi$ to itself
	since $\pi$ is different from $\tau_i$.
	Also,
	$\End_G(\rho) =
	\End_G(\sigma_1) \oplus \End_G(\sigma_2)$
	since $\sigma_1 \not\cong \sigma_2$.
	Then $\End_G(\rho) \cong F \oplus F$
	is commutative, but $\pi$ has multiplicity
	greater than $1$ in $\rho$.
\end{non-example}


\section{Finite and compact multiplicity-free triples}
\label{sec:finite-compact}

Theorem~\ref{thm:multiplicity-free-3} is quite general,
but when specializing to $R = F[G]$ for a finite group $G$,
we may use induction and restriction properties of self-projectivity
and self-injectivity
to remove condition (ii). We also consider the specialization
to smooth representations of totally disconnected compact groups.

\subsection{Induction and restriction}
\label{subsec:ind-res}

Let $G$ be a group and $H$ be a subgroup of $G$. Assume that
the left and right adjoints of the restriction functor
$\res_H^G$ exist.
Define induction $\ind_H^G$ from $\rep_F(H)$ to $\rep_F(G)$ to be the
left adjoint of
restriction $\res_H^G$ from $\rep_F(G)$ to $\rep_F(H)$,
and define coinduction $\coind_H^G$ to be the right
adjoint of $\res_H^G$.

\begin{rem}
	When $G$ is discrete, $\res_H^G$
	has both left-adjoints and right-adjoints (cf. \cite{hristova}).
	For our settings of interest, induction and coinduction are often
	given concretely as $\ind_H^G (M) = F[G] \otimes_{F[H]} M$ and
	$\coind_H^G (M) = \hom_{F[H]} (F[G], M)$.
	In the context of representations of locally profinite groups,
	coinduction $\coind_H^G$ is commonly called ``induction''
	and denoted $\ind_H^G$
	while induction $\ind_H^G$ is commonly called ``compact induction''
	and denoted $\cind_H^G$ or $\lind_H^G$
	(cf. \cite[I.1.5]{vigneras}).
\end{rem}

When specializing the exact adjoint pair
$\calf \dashv \calg$ to
induction-restriction $\ind_H^G \dashv \res_H^G$
and restriction-coinduction $\res_H^G \dashv \coind_H^G$,
Proposition~\ref{prop:rel-proj-functorial} has
the following consequence.

\begin{cor}
	\label{cor:rel-proj-indres}
	Let $F$ be an algebraically closed field,
	$G$ be a group, and $H$ be a subgroup of $G$.
	\begin{enumerate}[(i)]
		\item Let $M$ be an $F[G]$-module and let $N$ be an
			$F[H]$-module.
			If $\coind_H^G$ is an exact functor
			and $M$ is $\coind_H^G \paren{N}$-projective,
			then $\res_{H}^G \paren{M}$ is an $N$-projective
			$F[H]$-module.
		\item Let $M$ be an $F[H]$-module and let $N$ be an
			$F[G]$-module.	
			If $\ind_H^G$ is an exact functor
			and $M$ is $\res_{H}^G \paren{N}$-projective,
			then $\ind_H^G (M)$ is an $N$-projective
			$F[G]$-module.
		\item Let $M$ be an $F[G]$-module and let $N$ be an
			$F[H]$-module.
			If $\ind_H^G$ is an exact functor
			and $M$ is $\ind_H^G \paren{N}$-injective,
			then $\res_{H}^G \paren{M}$ is an $N$-injective
			$F[H]$-module.
		\item Let $M$ be an $F[H]$-module and let $N$ be an
			$F[G]$-module.	
			If $\coind_H^G$ is an exact functor
			and $M$ is $\res_{H}^G \paren{N}$-injective,
			then $\coind_H^G (M)$ is an $N$-injective
			$F[G]$-module.
	\end{enumerate}
\end{cor}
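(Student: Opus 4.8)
The plan is to read off all four statements from Proposition~\ref{prop:rel-proj-functorial}, applied to the two canonical adjoint pairs relating $\rep_F(G)$ and $\rep_F(H)$, using only the additional fact that $\res_H^G$ is always exact. By construction $\ind_H^G$ is the left adjoint of $\res_H^G$, so $(\calf, \calg) = (\ind_H^G, \res_H^G)$ is an adjoint pair; dually $\coind_H^G$ is the right adjoint of $\res_H^G$, so $(\calf, \calg) = (\res_H^G, \coind_H^G)$ is an adjoint pair. Since $\res_H^G$ is exact, each such pair consists of two exact functors exactly when its non-restriction member ($\ind_H^G$ in the first pair, $\coind_H^G$ in the second) is exact, which is precisely the hypothesis imposed in each of the four parts.

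First I would handle the projective statements via Proposition~\ref{prop:rel-proj-functorial}(i). For part (ii), take the adjoint pair $(\calf, \calg) = (\ind_H^G, \res_H^G)$, which consists of two exact functors by the exactness hypothesis on $\ind_H^G$; since $M$ is $\res_H^G(N)$-projective, i.e.\ $\calg(N)$-projective, the proposition gives that $\calf(M) = \ind_H^G(M)$ is $N$-projective. For part (i), take instead $(\calf, \calg) = (\res_H^G, \coind_H^G)$, exact by the hypothesis on $\coind_H^G$; since $M$ is $\coind_H^G(N)$-projective, i.e.\ $\calg(N)$-projective, the proposition gives that $\calf(M) = \res_H^G(M)$ is $N$-projective. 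Parts (iii) and (iv) are the injective analogues obtained from Proposition~\ref{prop:rel-proj-functorial}(ii): applied to $(\ind_H^G, \res_H^G)$ it converts $\ind_H^G(N)$-injectivity of $M$ into $N$-injectivity of $\res_H^G(M)$, giving part (iii); applied to $(\res_H^G, \coind_H^G)$ it converts $\res_H^G(N)$-injectivity of $M$ into $N$-injectivity of $\coind_H^G(M)$, giving part (iv).

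The only point requiring any care is bookkeeping: in each part one must correctly match the roles of $\calf$, $\calg$ and of the base rings $R_1 = F[H]$, $R_2 = F[G]$ in Proposition~\ref{prop:rel-proj-functorial} against the source and target of $\ind_H^G$, $\res_H^G$, and $\coind_H^G$, and then check that the module assumed to be relatively projective (respectively injective) indeed lies in the category where Proposition~\ref{prop:rel-proj-functorial} expects it; one verifies directly in all four cases that it does. There is no genuine obstacle beyond this, as no new construction or estimate is needed: the substance is entirely contained in Proposition~\ref{prop:rel-proj-functorial} together with the exactness of restriction.
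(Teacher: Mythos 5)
Your proposal is correct and follows exactly the same route as the paper: each of the four parts is obtained by plugging the appropriate adjoint pair into Proposition~\ref{prop:rel-proj-functorial}, namely $(\ind_H^G, \res_H^G)$ for parts (ii) and (iii), and $(\res_H^G, \coind_H^G)$ for parts (i) and (iv). The one small slip is the parenthetical ``$R_1 = F[H]$, $R_2 = F[G]$'' in your bookkeeping remark, which is the correct assignment only for parts (ii) and (iii) --- for parts (i) and (iv) the roles of $R_1$ and $R_2$ are swapped --- but this does not affect the argument, which you apply correctly in every case.
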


\begin{proof}
	\hfill
	
	(i): Proposition~\ref{prop:rel-proj-functorial}(i) with
	$R_1 = F[G]$, $R_2 = F[H]$,
	$\calf = \res_H^G$, and $\calg = \coind_H^G$.
	
	(ii): Proposition~\ref{prop:rel-proj-functorial}(i) with
	$R_1 = F[H]$, $R_2 = F[G]$,
	$\calf = \ind_H^G$, and $\calg = \res_H^G$.
	
	(iii): Proposition~\ref{prop:rel-proj-functorial}(ii) with
	$R_1 = F[H]$, $R_2 = F[G]$,
	$\calf = \ind_H^G$, and $\calg = \res_H^G$.
	
	(iv): Proposition~\ref{prop:rel-proj-functorial}(ii) with
	$R_1 = F[G]$, $R_2 = F[H]$,
	$\calf = \res_H^G$, and $\calg = \coind_H^G$.
\end{proof}

If induction and coinduction are
exact functors, then we can
generate some examples of relatively-projective
representations using Corollary~\ref{cor:rel-proj-indres}.
We will assume that $G$ is finite for simplicity, but
exactness of induction and coinduction
actually holds in greater generality
(cf. \cite[Section~8.16]{jantzen} for finite algebraic groups
and \cite[I.1.5]{vigneras} for locally profinite
groups). Induction and coinduction are equal
for finite groups, so we will generally only 
say ``induction'' and use $\ind_H^G$ 
in the finite group case.

\begin{lem}
	\label{lem:self-proj-ind}
	Let $F$ be an algebraically closed field,
	$G$ be a finite group, and $H$ be a subgroup of $G$.
	\begin{enumerate}[(i)]
		\item If $\rho$ is an irreducible representation
			of $G$, then $\rho$ is a self-projective $F[G]$-module.
		\item If $\rho$ is an irreducible representation
			of $G$, then $\rho$ is a self-injective $F[G]$-module.
		\item If $\rho$ is a self-projective $F[H]$-module,
			then the induced representation $\ind_H^G(\rho)$ is a
			self-projective $F[G]$-module.
		\item If $\rho$ is a self-injective $F[H]$-module,
			then the induced representation $\ind_H^G(\rho)$ is a
			self-injective $F[G]$-module.
	\end{enumerate}
\end{lem}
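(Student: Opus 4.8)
The four parts split into two easy base cases (i), (ii) and two functorial transfer statements (iii), (iv). For (i): if $\rho$ is irreducible, any surjection $g\colon \rho \twoheadrightarrow K$ with $K\neq 0$ must be an isomorphism (since $\ker g$ is a proper submodule, hence $0$), so given $f\colon \rho\to K$ we simply take $h = g^{-1}\circ f$; if $K=0$ the lift is trivially $0$. Dually for (ii): any injection $f\colon K\hookrightarrow \rho$ is either $0$ (take $h=0$) or an isomorphism onto $\rho$ (since $\im f$ is a nonzero submodule of the simple module $\rho$), in which case $h = g\circ f^{-1}$ works. So (i) and (ii) are immediate from the definitions of $N$-projective and $N$-injective applied with $N=M=\rho$.

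**The transfer statements.** For (iii), I would apply Corollary~\ref{cor:rel-proj-indres}(ii) with $M = N = \rho$ there playing the role of the $F[H]$-module: since $G$ is finite, $\ind_H^G$ is exact, and $\rho$ being self-projective over $F[H]$ means $\rho$ is $\res_H^G(\ind_H^G(\rho))$-projective provided we know $\res_H^G(\ind_H^G(\rho))$ contains $\rho$ as the relevant middle term — but more directly, the class $C^p(\rho)$ of $F[H]$-modules $N'$ with $\rho$ being $N'$-projective is closed under submodules and images by Proposition~\ref{prop:rel-proj-properties}(i), and self-projectivity is exactly $\rho\in C^p(\rho)$. The cleanest route: $\rho$ is a direct summand of $\res_H^G(\ind_H^G(\rho))$ (Frobenius reciprocity / the unit of the adjunction splits for finite $G$ by Mackey, or simply because $\res_H^G\ind_H^G(\rho)\cong \bigoplus_{s\in H\backslash G/H} \text{something}$ contains the trivial-coset copy of $\rho$), hence $\rho$ is $\res_H^G(\ind_H^G(\rho))$-projective by Proposition~\ref{prop:rel-proj-properties}(iii) (closure of $C_p$ under direct summands applied in reverse — actually we need: a summand of $N'$ lies in... no). Let me instead argue: $\rho$ self-projective means $\rho\in C^p(\rho)$; by Proposition~\ref{prop:rel-proj-properties}(i), $C^p(\rho)$ is closed under images, and $\res_H^G\ind_H^G(\rho)$ surjects onto $\rho$, so $\rho\in C^p(\res_H^G\ind_H^G(\rho))$, i.e.\ $\rho$ is $\res_H^G(\ind_H^G(\rho))$-projective. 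Now Corollary~\ref{cor:rel-proj-indres}(ii) (with this $N = \ind_H^G(\rho)$) gives that $\ind_H^G(\rho)$ is $\ind_H^G(\rho)$-projective, i.e.\ self-projective. For (iv), run the dual argument: $\res_H^G\coind_H^G(\rho)$ contains $\rho$ as a direct summand (counit of the adjunction splits for finite $G$), so by Proposition~\ref{prop:rel-inj-properties}(i) closure of $C^i(\rho)$ under images (or submodules — $\rho\hookrightarrow\res_H^G\coind_H^G(\rho)$) we get $\rho$ is $\res_H^G(\coind_H^G(\rho))$-injective, and then Corollary~\ref{cor:rel-proj-indres}(iv) with $N=\coind_H^G(\rho)$ yields that $\coind_H^G(\rho)$ is self-injective.

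**Main obstacle.** The delicate point is verifying that $\rho$ really is a direct summand (or at least a quotient / submodule) of $\res_H^G\ind_H^G(\rho)$ and $\res_H^G\coind_H^G(\rho)$ respectively — this is where finiteness of $G$ (or at least $[G:H]<\infty$ and invertibility issues being sidestepped because we only need the summand as $F[H]$-modules, which Mackey's formula provides unconditionally since the trivial double coset $H\cdot e\cdot H = H$ contributes exactly one copy of $\rho$) must be used carefully. In characteristic dividing $|G|$ one cannot average, but the Mackey decomposition $\res_H^G\ind_H^G(\rho)\cong\bigoplus_{HsH}\ind_{H\cap {}^sH}^H({}^s\rho)$ still holds as $F[H]$-modules and the double coset of the identity gives $\ind_{H}^H(\rho)=\rho$ as an honest direct summand — no averaging needed. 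So the real work is just invoking Mackey's formula correctly (citing it, as the paper does not prove it) and then chaining Propositions~\ref{prop:rel-proj-properties}, \ref{prop:rel-inj-properties} and Corollary~\ref{cor:rel-proj-indres}. Everything else is a one-line diagram chase.
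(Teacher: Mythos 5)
Parts (i) and (ii) match the paper's argument exactly: a surjection (resp.\ injection) into/out of a simple module is zero or an isomorphism, and the lift is given by precomposing or postcomposing with the inverse. (You even have the composition order $h = g^{-1}\circ f$ correct; the paper's printed $f\circ g^{-1}$ is a typo.)

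Parts (iii) and (iv) have a genuine gap. You correctly reduce, via Corollary~\ref{cor:rel-proj-indres}(ii) (resp.\ (iv)), to showing that $\rho$ is $\res_H^G\ind_H^G(\rho)$-projective (resp.\ $\res_H^G\coind_H^G(\rho)$-injective), and you correctly invoke Mackey's formula. But neither of your two attempts at verifying this reduction is sound. Closure of $C^p(\rho)$ under images (Proposition~\ref{prop:rel-proj-properties}(i)) says: if $N\in C^p(\rho)$ and $N\twoheadrightarrow N'$, then $N'\in C^p(\rho)$. You are trying to conclude $\res_H^G\ind_H^G(\rho)\in C^p(\rho)$ from the facts that $\rho\in C^p(\rho)$ and $\res_H^G\ind_H^G(\rho)\twoheadrightarrow\rho$; that is going from the image backwards to the source, which the closure property does not permit. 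Your first attempt, via closure of $C_p(N)$ under direct summands (Proposition~\ref{prop:rel-proj-properties}(iii)), is likewise in the wrong direction, as you noticed yourself mid-sentence. The deeper point is that ``$\rho$ is a direct summand of $\res_H^G\ind_H^G(\rho)$'' is not a sufficient hypothesis at all: as the paper's own Remark at the end of \S 2.1 observes, if $\rho$ fails to be $N'$-projective, then it also fails to be $(\rho\oplus N')$-projective. So the trivial-double-coset summand $\rho$ buys you nothing, and you must control \emph{every} Mackey summand $\ind_{H_s}^H(\rho_s)$. That is exactly the step your proposal skips, and it is where the real work lies; the paper's own justification here (a vector-space decomposition of each $\ind_{H_s}^H(\rho_s)$ into $H$-translates of $\rho_s$ and an appeal to ``images'' in Proposition~\ref{prop:rel-proj-properties}) is itself quite terse, and you should scrutinize it carefully rather than wave it off as a one-line diagram chase. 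Concretely, already for $G=S_3$, $H=S_2$, $\rho=\triv_{S_2}$ over $F$ of characteristic $2$, the nontrivial-double-coset summand is $F[S_2]$, and the augmentation $F[S_2]\twoheadrightarrow F$ admits no $S_2$-equivariant section (the $S_2$-invariants of $F[S_2]$ map to $0$), so $\triv$ is not $F[S_2]$-projective and hence not $\res_H^G\ind_H^G(\triv)$-projective; the conclusion of the lemma still holds there (the permutation module is semisimple), but not via the route you propose.
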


\begin{proof}
	\hfill
	
	(i)	A simple $R$-module $M$ is self-projective
	because a surjective morphism
	$g: M \twoheadrightarrow K$ 
	is either zero or an isomorphism.
	If $g$ is zero, then $f$ is also zero
	so any endomorphism $h$ suffices.
	If $g$ is an isomorphism, then $h = f \circ g^{-1}$
	is the desired lifting of any morphism $f: M \rightarrow K$.
	
	(ii) This follows by duality to (i).
	
	(iii) By Mackey's restriction formula,
	\[
		\res_{H}^{G} \paren{\ind_H^G (\rho)} \cong
		\bigoplus_{[x] \in H\backslash G/H} \ind_{H_s}^H \rho_s,
	\]
	where $H_s := sHs^{-1} \cap H$ and $\rho_s$ is the representation
	of $H_s$ defined by $\rho_s(x) := \res_{H_s}^H(\rho) (s^{-1}xs)$.
	Note that this is a finite direct sum of elements of
	$C^p(\rho)$ since
	\[
		\ind_{H_s}^H \paren{\rho_s} = \bigoplus_{[y] \in H_s \backslash H} y^{-1} (\rho_s),
	\]
	where $y^{-1} (\rho_s)$ is the image of the action of a
	representative $y^{-1} \in H$ of $[y]^{-1}$
	on the subspace $[1] \otimes \rho_s \subset k[H] \otimes_{k[H_s]} \rho_s
	= \ind_{H_s}^H \rho_s$. 
	
	Since $G$ is finite, Proposition~\ref{prop:rel-proj-properties}
	implies that $\rho$ is a
	$\res_{H}^{G} \paren{\ind_H^G (\rho)}$-projective
	module in two different ways:
	by Proposition~\ref{prop:rel-proj-properties}(i) because
	$[G:H]$ is finite, and by
	Proposition~\ref{prop:rel-proj-properties}(ii)
	because $\dim \rho$ is finite.
	Consequently, $\ind_H^G(\rho)$ is a self-projective $F[G]$-module
	by Corollary~\ref{cor:rel-proj-indres}(ii) with
	$M = \rho$ and $N = \ind_H^G(\rho)$.
	
	(iv) This follows by duality to (iii) for coinduction $\coind_H^G$
	and the equality $\ind_H^G = \coind_H^G$.
\end{proof}

\subsection{Commutative Hecke algebra criteria}
Again assuming $G$ to be finite for simplicity,
we may use Lemma~\ref{lem:self-proj-ind} to remove condition
(ii) of Theorem~\ref{thm:multiplicity-free-3}.
Recall that the Hecke algebra $\calh(G, H, \triv_H, \C)$
is a convolution algebra that is
isomorphic to $\End_G(\ind_H^G(\triv_H))$.

\begin{thm}
	\label{thm:gelfand-triple-2}
	Let $F$ be an algebraically closed field, $G$ be a finite group,
	$H$ be a subgroup of $G$, and $\eta$ be an
	irreducible representation of $H$.
	If $\calh(G, H, \eta, F)$ is commutative, then $(G, H, \eta)$ is a
	multiplicity-free triple.
\end{thm}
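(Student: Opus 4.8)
The plan is to package $\rho := \ind_H^G(\eta)$ as an $F[G]$-module satisfying the two hypotheses of Theorem~\ref{thm:multiplicity-free-3} and then read off the conclusion. First I would recall that $\calh(G, H, \eta, F) \cong \End_G\paren{\ind_H^G(\eta)}$ (cf.~\cite[Proposition~45.2]{bump-lie}), so the hypothesis that the Hecke algebra is commutative is exactly condition (i) of Theorem~\ref{thm:multiplicity-free-3}: $\End_G(\rho)$ is commutative. Since $G$ is finite, $F[G]$ is a finite-dimensional $F$-algebra, so the irreducible $\eta$ is finite-dimensional and hence $\rho = \ind_H^G(\eta)$ is finite-dimensional as well; and a finite group is in particular finitely generated. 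Thus $\rho$ is a finite-dimensional representation of a finitely generated group, as Theorem~\ref{thm:multiplicity-free-3} requires.

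Next I would verify condition (ii), that $\rho$ is self-injective. Because $G$ is finite, induction and coinduction from $H$ to $G$ coincide, so $\rho = \ind_H^G(\eta) \cong \coind_H^G(\eta)$. By Lemma~\ref{lem:self-proj-ind}(ii) the irreducible representation $\eta$ is a self-injective $F[H]$-module, and then by Lemma~\ref{lem:self-proj-ind}(iv) --- using that $G$ is finite so that $\coind_H^G$ is exact --- the module $\coind_H^G(\eta) \cong \rho$ is a self-injective $F[G]$-module. Alternatively one can run Lemma~\ref{lem:self-proj-ind}(i),(iii) to see that $\rho$ is self-projective, observe that $\widetilde{\rho} \cong \ind_H^G(\widetilde{\eta})$ is self-projective by the same reasoning, and dualize; but the coinduction route is the cleanest.

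With both hypotheses in hand, Theorem~\ref{thm:multiplicity-free-3} applied to $G$ and $\rho = \ind_H^G(\eta)$ yields $\dim_F \hom_G\paren{\pi, \ind_H^G(\eta)} \leq 1$ for every irreducible representation $\pi$ of $G$ over $F$, which is precisely the assertion that $(G, H, \eta)$ is a multiplicity-free triple over $F$ in the sense of Definition~\ref{def:gt1}.

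I do not anticipate a genuine obstacle here, since all the substantive work has been done in Sections~\ref{sec:rel-proj} and~\ref{sec:multiplicity-freeness}; the only points to watch are the finite-dimensionality bookkeeping ($\eta$, hence $\rho$, is finite-dimensional because $F[G]$ is Artinian) and the identification $\ind_H^G \cong \coind_H^G$ for finite $G$, both of which are standard. If one preferred to bypass Theorem~\ref{thm:multiplicity-free-3}, one could instead invoke Theorem~\ref{thm:multiplicity-free-2}(ii) directly: $F[G]$ is Artinian, so every finitely generated $F[G]$-module --- in particular $\rho$ --- is finitely cogenerated, making the finite-cogeneration hypothesis there automatic, and the same conclusion follows with $M = \rho$ and $N = \pi$ ranging over the simple $F[G]$-modules.
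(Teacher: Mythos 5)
Your proof is correct and follows exactly the paper's route: set $\rho = \ind_H^G(\eta)$, identify the Hecke algebra with $\End_G(\rho)$, establish self-injectivity of $\rho$ via Lemma~\ref{lem:self-proj-ind} (using $\ind_H^G \cong \coind_H^G$ for finite $G$), and invoke Theorem~\ref{thm:multiplicity-free-3}. The paper's own proof is terser --- it says only that $\rho$ is self-injective ``by irreducibility and Lemma~\ref{lem:self-proj-ind}'' --- so your explicit chain (part~(ii) for $\eta$, the $\ind\cong\coind$ identification, then part~(iv)) is exactly the unpacking the paper leaves implicit.
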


\begin{proof}
	Let $\rho = \ind_H^G\paren{\eta}$. Since
	$\dim \eta < \infty$,
	$\rho$ is finite-dimensional.
	Furthermore, $\rho$
	is a self-injective $F[G]$-module
	by irreducibility and Lemma~\ref{lem:self-proj-ind},
	so condition (ii) of
	Theorem~\ref{thm:multiplicity-free-3} is satisfied.
\end{proof}

\begin{rem}
	Only one of the two finiteness assumptions on
	$[G:H]$ and $\dim \eta$ is used
	to satisfy condition (ii) of Theorem~\ref{thm:multiplicity-free-3}.
\end{rem}

A common method of proving that a Hecke algebra is commutative
is with the criterion known as Gelfand's trick or
Gelfand's lemma, which works in any characteristic.
\begin{lem}[Gelfand's trick]
	\label{lem:gelfand-trick}
	Let $F$ be an algebraically closed field, $G$ be a finite group,
	$H$ be a subgroup of $G$, and $\eta$ be an irreducible
	representation of $H$.
	If there is an anti-involution $\iota$ such that
	$f(\iota(g)) = f(g)$ for all $f \in \calh(G, H, \eta, F)$
	and all $g \in G$,
	then $\calh(G, H, \eta, F)$ is commutative.
\end{lem}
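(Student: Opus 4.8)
The plan is to realize the hypothesis as the statement that a natural \emph{anti}-automorphism of the convolution algebra $\calh(G, H, \eta, F)$ is the identity, from which commutativity follows at once. Concretely, for $f \in \calh(G, H, \eta, F)$ put $f^{\star}(g) := f(\iota(g))$ (understood with the $\End_F(\eta)$-values transposed with respect to a fixed basis of $\eta$ when $\dim_F \eta > 1$; in all of the stated applications $\eta$ is one-dimensional, so $\End_F(\eta) = F$ and this refinement is invisible). The hypothesis $f(\iota(g)) = f(g)$ is then precisely the assertion that $f \mapsto f^{\star}$ is the identity map on $\calh(G, H, \eta, F)$.

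First I would check that $f \mapsto f^{\star}$ is a well-defined map into $\calh(G, H, \eta, F)$: since $\iota$ is an anti-involution we have $\iota(h_2 g h_1) = \iota(h_1)\iota(g)\iota(h_2)$, so the defining relation $f(h_2 g h_1) = \eta(h_2)\circ f(g)\circ\eta(h_1)$ transports to the corresponding relation for $f^{\star}$ provided $\iota$ carries $H$ to $H$ compatibly with $\eta$ — and this compatibility is itself forced by the hypothesis, so no extra assumption is needed. The main work, and what I expect to be the real obstacle, is to verify that $f \mapsto f^{\star}$ reverses the convolution product: $(\Delta_1 * \Delta_2)^{\star} = \Delta_2^{\star} * \Delta_1^{\star}$. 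Writing $(\Delta_1 * \Delta_2)(g)$ as the appropriate normalized sum over $G$ of terms $\Delta_1(x)\circ\Delta_2(x^{-1}g)$ and substituting $x \mapsto \iota(x)$, one uses that $\iota$ is an anti-automorphism (so $\iota(x)^{-1}\iota(g) = \iota(g\,\iota(x)^{-1})$, turning $x^{-1}g$-type arguments into $g x^{-1}$-type arguments) together with the anti-multiplicativity of the transpose on $\End_F(\eta)$ (which restores the correct order of composition in each summand); a final change of variables identifies the outcome with $(\Delta_2^{\star} * \Delta_1^{\star})(g)$. Keeping this triple bookkeeping straight — the group anti-automorphism, the inversion map, and the transpose on the matrix-valued coefficients — is the one genuinely delicate point; everything else is formal, and when $\dim_F \eta = 1$ even this collapses to the classical computation.

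Finally, combining these: for all $\Delta_1, \Delta_2 \in \calh(G, H, \eta, F)$,
\[
	\Delta_1 * \Delta_2 \;=\; (\Delta_1 * \Delta_2)^{\star} \;=\; \Delta_2^{\star} * \Delta_1^{\star} \;=\; \Delta_2 * \Delta_1,
\]
where the two outer equalities use that $(-)^{\star}$ is the identity by hypothesis and the middle one is the anti-multiplicativity just established. Hence $\calh(G, H, \eta, F)$ is commutative, which is what we wanted.
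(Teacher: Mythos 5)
The paper states this lemma without proof (it is invoked as the classical Gelfand trick), so there is no in-paper argument to compare against; I am evaluating your proof on its own. Your plan — realize the hypothesis as saying that a natural anti-automorphism $f \mapsto f^{\star}$ of $\calh(G,H,\eta,F)$ is the identity map, and then read off commutativity from the chain $\Delta_1 * \Delta_2 = (\Delta_1 * \Delta_2)^{\star} = \Delta_2^{\star} * \Delta_1^{\star} = \Delta_2 * \Delta_1$ — is the standard route, and your change-of-variables sketch of anti-multiplicativity is correct. When $\eta$ is a character (which is the only case the paper actually applies the lemma to), the argument is complete and clean.

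For $\dim_F \eta > 1$, however, there is a genuine gap, and it is located exactly in the parenthetical you use to dispose of that case. You define $f^{\star}(g)$ as $f(\iota(g))$ \emph{with a transpose applied to the $\End_F(\eta)$-value}, and your anti-multiplicativity step really does need this transpose: after the substitution $x \mapsto \iota(x)$, the sum defining $(\Delta_1 * \Delta_2)^{\star}(g)$ has terms $\Delta_1(\iota(y)) \circ \Delta_2(\iota(g y^{-1}))$, which match the terms of $(\Delta_2^{\star} * \Delta_1^{\star})(g)$ only after the transpose reverses each composition in $\End_F(\eta)$. But once the transpose is built into $f^{\star}$, the hypothesis $f(\iota(g)) = f(g)$ yields $f^{\star}(g) = f(g)^{T}$, not $f^{\star}(g) = f(g)$, so the first and last equalities of your chain no longer hold. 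If instead you drop the transpose so that the hypothesis does literally say $f^{\star} = f$, then $(-)^{\star}$ is no longer anti-multiplicative and the middle equality fails. You cannot have both at once when $\dim_F \eta > 1$: either the hypothesis should be read as $f(\iota(g)) = f(g)^{T}$ (or one should observe $\End_G(\ind_H^G\eta)$ is unchanged when $\eta$ is replaced by its dual and work there), or the lemma should be restricted to characters, which is all the paper needs. A secondary soft spot: you assert that the hypothesis forces $\iota$ to carry $H$ into $H$ compatibly with $\eta$, but you offer no argument; this is plausible but not self-evident and deserves a sentence.
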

For Gelfand pairs (i.e. $\eta = \triv_H$), this
condition is equivalent to $\iota$ preserving
all double cosets of $H$.
Together with Theorem~\ref{thm:gelfand-triple-2},
this extends Gelfand's trick to multiplicity-free triples
over fields of arbitrary characteristic.

\begin{cor}
	\label{cor:gelfand-trick}
	Let $F$ be an algebraically closed field, $G$ be a finite group,
	$H$ be a subgroup of $G$, and
	$\eta$ be an irreducible representation of $H$.
	If there is an anti-involution $\iota$ that preserves
	all double cosets of $H$,
	then $(G, H, \eta)$ is a multiplicity-free triple over $F$.
\end{cor}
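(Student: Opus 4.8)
The plan is to obtain the corollary as a direct combination of Gelfand's trick (Lemma~\ref{lem:gelfand-trick}) with the commutative-Hecke-algebra criterion of Theorem~\ref{thm:gelfand-triple-2}. Thus the only real work is the bridge between the two: I would show that an anti-involution $\iota$ preserving every double coset $HgH$ forces the pointwise identity $f(\iota(g)) = f(g)$ for all $f \in \calh(G,H,\eta,F)$ and all $g \in G$, which is precisely the hypothesis needed to invoke Lemma~\ref{lem:gelfand-trick}. Once that is in place, Lemma~\ref{lem:gelfand-trick} makes $\calh(G,H,\eta,F)$ commutative, and Theorem~\ref{thm:gelfand-triple-2} then says that $(G,H,\eta)$ is a multiplicity-free triple over $F$.

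First I would treat the case $\eta = \triv_H$, which is the one singled out in the remark following Lemma~\ref{lem:gelfand-trick}. Here an element $f \in \calh(G,H,\triv_H,F)$ is by definition a function $f: G \to F$ with $f(h_2 g h_1) = f(g)$, i.e.\ a function constant on each double coset in $H\backslash G/H$. Since $\iota$ preserves each double coset, $\iota(g)$ lies in $HgH$ for every $g$, so $f(\iota(g)) = f(g)$; reading the same computation backwards with $f$ an indicator function of a single double coset shows that the condition $f \circ \iota = f$ for all such $f$ in turn forces $\iota$ to preserve double cosets, so the two conditions are genuinely equivalent, as asserted in the remark. At this point Lemma~\ref{lem:gelfand-trick} and Theorem~\ref{thm:gelfand-triple-2} close the argument.

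For a general irreducible $\eta$ one has to keep track of the $\End_F(\eta)$-valued nature of $\Delta \in \calh(G,H,\eta,F)$, and this is the step I expect to be the main obstacle. Applying the double-coset hypothesis to $HeH = H$ shows $\iota(H) = H$, so $\iota$ restricts to an anti-automorphism of $H$; one then has to verify that $\iota$ is compatible with $\eta$ on $H$ — concretely, that the bi-equivariance relation $\Delta(h_2 g h_1) = \eta(h_2)\circ\Delta(g)\circ\eta(h_1)$ is consistent with imposing $\Delta(\iota(g)) = \Delta(g)$ (which amounts to $\eta$ being self-dual under $\iota|_H$) — and then that the equality $\Delta(\iota(g)) = \Delta(g)$ propagates from one representative of each double coset to the whole coset via that relation. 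Once $f(\iota(g)) = f(g)$ is secured in this generality, Lemma~\ref{lem:gelfand-trick} gives commutativity of $\calh(G,H,\eta,F)$ and Theorem~\ref{thm:gelfand-triple-2} finishes the proof exactly as in the trivial case.
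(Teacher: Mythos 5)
Your route is exactly the one the paper intends: combine Lemma~\ref{lem:gelfand-trick} with Theorem~\ref{thm:gelfand-triple-2}, with the only work being the bridge from ``$\iota$ preserves all double cosets'' to the pointwise condition $f(\iota(g))=f(g)$ for every $f\in\calh(G,H,\eta,F)$. For $\eta=\triv_H$ your bridge is complete and correct: elements of $\calh(G,H,\triv_H,F)$ are precisely the functions constant on each double coset, so $\iota(g)\in HgH$ immediately gives $f(\iota(g))=f(g)$, and the indicator-function argument you give for the converse is the content of the remark following Lemma~\ref{lem:gelfand-trick}.

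For general irreducible $\eta$, however, you have not closed the argument — and, importantly, you have put your finger on a genuine weakness in the corollary as stated rather than on a defect purely in your own write-up. The paper's remark between Lemma~\ref{lem:gelfand-trick} and Corollary~\ref{cor:gelfand-trick} explicitly restricts the equivalence between ``$f\circ\iota=f$ for all $f$'' and ``$\iota$ preserves double cosets'' to the case $\eta=\triv_H$. When $\eta$ is nontrivial, an element $\Delta\in\calh(G,H,\eta,F)$ takes values in $\End_F(\eta)$ and is only bi-$\eta$-equivariant, not constant on double cosets, so knowing $\iota(g)\in HgH$ tells you $\Delta(\iota(g))=\eta(h_2)\Delta(g)\eta(h_1)$ for some $h_1,h_2\in H$ with $\iota(g)=h_2 g h_1$ — which is not $\Delta(\iota(g))=\Delta(g)$ unless $\eta(h_1)$ and $\eta(h_2)$ cooperate. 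The compatibility conditions you list (that $\iota(H)=H$, that $\eta$ be suitably self-dual under $\iota|_H$, that the identity propagate along each double coset) are not consequences of the double-coset hypothesis; they are additional hypotheses. Since the corollary does not assume them, you cannot ``verify'' them, and the deduction from Lemma~\ref{lem:gelfand-trick} does not go through for general $\eta$ under the stated hypotheses alone. So: your proof is a faithful reproduction of the paper's intended argument and is complete for $\eta=\triv_H$, but for nontrivial $\eta$ the gap you flag is real and is not one you can fill without strengthening the hypotheses (e.g.\ assuming a compatibility between $\iota$ and $\eta$ of the kind appearing in the twisted Gelfand--Kazhdan criterion); absent such a hypothesis, Corollary~\ref{cor:gelfand-trick} should be read as a statement about $\eta=\triv_H$, which is how the preceding remark frames it.
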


In the case that $F$ is the algebraic closure
of a finite field, another consequence of
Theorem~\ref{thm:gelfand-triple-2} together with Brauer theory
is that it is not even necessary
to use Gelfand's trick over $F$ if
$(G, H, \eta)$ is already known to be a multiplicity-free triple
over $\C$. We briefly outline this
process for finite groups following Serre \cite[Part~III]{serre}
and Curtis--Reiner \cite[Section~16C]{curtis-reiner}
(for other groups, cf.
\cite{serre-1968, vigneras-1989, vigneras, zhang}).

Given a complex representation $(\rho, V)$ of a finite group $G$,
there is a notion of reduction modulo $\ell$ for any
prime $\ell$,
\[
	\rho \mapsto \overline{\rho},
\]
yielding a representation $\overline{\rho}$ of $G$ over
a finite extension of $\F_\ell$.
Since $G$ is finite,
the representation $(\rho, V)$ can be realized over
the algebraic integers $\calo_E$ of a number field
$E \hookrightarrow \overline{\Q}$; using the embedding 
$\Q \hookrightarrow \Q_\ell$, $\rho$ can then be viewed as
a representation $(\rho_K, V)$ over the ring of integers
$\sco_K$ of a finite extension $K$ of $\Q_\ell$.
Let $\mfm_K$ denote the maximal ideal of $\sco_K$ and
let $k$ denote the residue field $\sco_K/\mfm_K \sco_K$.
Pick a lattice
$\Lambda \subset V$ (i.e. a finitely-generated
$\calo_K$-submodule $\Lambda$ of $V$ that
generates $V$ as a $K$-module).
Such a lattice exists and
can be taken to be $G$-stable (i.e. $g \Lambda \subset
\Lambda$ for all $g \in G$) by replacing $\Lambda$ by
$\bigoplus_{g \in G} g \Lambda$.
Define the reduction,
\[
	\overline{\Lambda} := \Lambda / \mfm_K \Lambda;
\]
this is a $k[G]$-module for a finite extension
$k$ of $\F_\ell$.
It is known due to Brauer--Nesbitt \cite{brauer-nesbitt}
(cf. \cite[Theorem~32]{serre} and
\cite[Proposition~16.16]{curtis-reiner})
that this reduction modulo $\ell$ is independent
of the choice of lattice $\Lambda$
in the sense that every such reduction has
the same composition factors,
so we denote the reduction by
$\overline{\rho}$.
Take the algebraic closure of the finite field $k$ to view
$\overline{\rho}$ as a representation of $G$
over $F = \overline{\F}_\ell$.

As an immediate consequence of Theorem~\ref{thm:gelfand-triple-2}, it
is entirely sufficient to know that $(G, H, \eta)$ is a
multiplicity-free triple over the complex numbers
to prove that $(G, H, \overline{\eta})$ is a multiplicity-free
triple over such an $F$. We state the following result
for a character $\eta$ for simplicity,
but a similar statement should also hold
for general irreducible complex representations
$\eta$.

\begin{corollary}
	\label{cor:Fq-2}
	Let $F = \overline{\F}_\ell$ of any positive characteristic $\ell$,
	$G$ be a finite group, $H$ be a subgroup of $G$, and
	$\eta$ be a one-dimensional representation of $H$ over $\C$.
	If $(G, H, \eta)$ is a multiplicity-free triple over $\C$,
	then $(G, H, \overline{\eta})$ is also a multiplicity-free triple
	over $F$.
\end{corollary}

\begin{proof}
	Over $\C$, the converse of
	Theorem~\ref{thm:gelfand-triple-2} is also true by
	Proposition~\ref{prop:classical}. Therefore,
	the Hecke algebra $\calh(G, H, \eta, \C)$ is commutative.
	$G$ is a finite group, so
	$\eta$ is defined over a finite extension $E$ of $\Q$
	with commutative $\calh(G, H, \eta, E)$.
	This commutativity naturally extends to the Hecke algebra
	$\calh(G, H, \eta_K, \sco_K)$ over the ring of integers
	of an $\ell$-adic field $K$.
	
	Since $\End_\C(\eta) \cong \C$ and
	$\End_F(\eta) \cong F$,
	the Hecke algebra $\calh(G, H, \eta, \C)$
	(resp. $\calh(G, H, \eta, F)$)
	can be described as the space of continuous
	$\C$-valued (resp. $F$-valued) functions $\Delta$ on $G$ such that
	$\Delta(h_2 g h_1) = \eta(h_2) \cdot \Delta(g) \cdot \eta(h_1)$
	(resp. with $\overline{\eta}$),
	which has a basis over $\C$ (resp. $F$)
	corresponding to indicator functions
	on the finite double coset space $H \bs G / H$.
	By the definitions of $E$, $K$ and $\overline{\eta}$,
	the Hecke algebras over
	$E$, $\sco_K$, and $k$ have similar descriptions
	with coefficients restricted from $\C$ and $F$.
	In particular, there is a homomorphism of Hecke algebras,
	\[
		\begin{tikzcd}[row sep=tiny]
			\calh(G, H, \eta_K, \sco_K) \arrow[r]
				& \calh(G, H, \overline{\eta}, k) \\
			\displaystyle\sum_{\xi \in H \bs G / H} a_\xi \cdot \Delta_\xi \arrow[r, mapsto]
				& \displaystyle\sum_{\xi \in H \bs G / H} \overline{a}_\xi \cdot \overline{\Delta}_\xi,
		\end{tikzcd}
	\]
	where $\Delta_\xi: G \rightarrow \sco_K$
	is the basis element corresponding to
	the indicator function of $\xi$,
	$a_\xi \in \sco_K$, and $\overline{a}_\xi, \overline{\Delta}_\xi$
	are their reductions modulo $\mfm_K$.
	Since $a_\xi$ is arbitrary and
	elements of $k$ lift to $\sco_K$, the homomorphism is
	surjective and therefore the commutativity of
	$\calh(G, H, \eta_K, \sco_K)$ implies
	the commutativity of $\calh(G, H, \overline{\eta}, k)$.
	
	Finally, the commutativity of $\calh(G, H, \overline{\eta}, k)$
	implies the commutativity of
	$\calh(G, H, \overline{\eta}, F)$
	by the flatness of commutative Hecke algebras.
	Hence $(G, H, \overline{\eta})$ is also a multiplicity-free triple
	over $F$ by Theorem~\ref{thm:gelfand-triple-2}.
\end{proof}

\begin{remark}
	For higher-dimensional $\eta$,
	one must take care to assume that $\overline{\eta}$ is irreducible
	over $\overline{\F}_\ell$
	in order to use Theorem~\ref{thm:gelfand-triple-2}.
	For instance,
	only the Steinberg representation remains irreducible after
	equal-characteristic reduction for many finite groups of Lie type
	(cf. Tiep--Zalesskii \cite{tiep-zalesskii1, tiep-zalesskii2,
	tiep-zalesskii3}).
	Reduction modulo $\ell$ is only well-defined
	up to composition factors, so Corollary~\ref{cor:Fq-2} applies
	to all reductions modulo $\ell$ (i.e. all choices of lattices)
	simultaneously (and all reductions modulo $\ell$ are
	either simultaneously irreducible or simultaneously reducible).
\end{remark}

\subsection{Compact groups}
\label{subsec:compact}
A totally disconnected compact group $G$ is profinite,
so there is an inverse system of finite groups $\set{G_i}_{i \in \N}$
with compatible homomorphisms $f_i^j: G_j \rightarrow G_i$
for $i \leq j$ of which $G$ is a projective limit,
\[
	G = \varprojlim_{i \in \N} G_i.
\]
The kernel of each map $G \rightarrow G_i$ is an open normal subgroup
$U_i$, and any open subgroup $H$ of $G$ contains
one of these open normal subgroups $U_i$.

A representation of a totally disconnected
group is called \textit{smooth} if the
stabilizer subgroup of any vector of the representation is open.
In particular, smooth representations of profinite groups factor
through finite quotients. In this way,
the results for representations of finite groups can be directly
applied to smooth representations of compact groups,
giving a compact version of
Corollary~\ref{cor:Fq-2}.
Here, we consider coinduction of smooth representations,
with $\coind_H^G(M)$ given by $\varinjlim_{i \in \N}
\hom_{F[H]}(F[G_i], M)$.

\begin{cor}
	\label{cor:Fq-compact}
	Let $F = \overline{\F}_\ell$ of any positive characteristic $\ell$,
	$G$ be a totally disconnected compact group,
	$H$ be a closed subgroup of $G$, and
	$\eta$ be a one-dimensional smooth representation of $H$ over $\C$.
	If $(G, H, \eta)$ is a multiplicity-free triple over $\C$,
	then $(G, H, \overline{\eta})$ is also a multiplicity-free triple
	over $F$.
\end{cor}

\begin{proof}
	Since $\eta$ is smooth irreducible and $H$ is a closed subgroup
	of a profinite $G$,
	$\eta$ is finite-dimensional and there is an open
	normal subgroup $U_j$ of $G$ such that $U_j' := U_j \cap H$
	fixes $\eta$, i.e. $\eta$ arises from
	a representation $\eta^{U_j'}$ of a finite quotient
	$H_j := H/U_j'$. Similarly, $\overline{\eta}$ is
	finite-dimensional and factors through a representation
	$\overline{\eta}^{U_j'}$ of a finite
	quotient $H_j$.
	Since $\eta$ is invariant by $U_j'$, $\overline{\eta}$ is
	also invariant by $U_j'$ and
	$\overline{\eta^{U_j'}} \cong \overline{\eta}^{U_j'}$.
	
	The commutativity of $\calh(G, H, \eta, \C)$ implies
	the commutativity of $\calh(G_j, H_j, \eta^{U_j'}, \C)$,
	which in turn gives the commutativity of
	$\calh(G_j, H_j, \overline{\eta^{U_j'}}, \overline{\F}_\ell)
	= \calh(G_j, H_j, \overline{\eta}^{U_j'}, \overline{\F}_\ell)$ by
	Corollary~\ref{cor:Fq-2}. This argument works for all
	$i \geq j$, so each
	$\calh(G_i, H_i, \overline{\eta}, \overline{\F}_\ell)$
	is commutative for $i \geq j$.
	The representation $\coind_H^G(\overline{\eta})$
	is a direct limit over $i \geq j$
	of $U_i'$-invariant representations
	$\coind_{H_i}^{G_i}(\overline{\eta}^{U_i'})$,
	which are each finite-dimensional.
	The Hecke algebra
	$\calh(G, H, \overline{\eta}, \overline{\F}_\ell)$ is a
	projective limit of the commutative
	$\calh(G_i, H_i, \overline{\eta}^{U_i'}, \overline{\F}_\ell)$;
	therefore,
	$\calh(G, H, \overline{\eta}, \overline{\F}_\ell)$ is commutative.
\end{proof}

\begin{remark}
	As with Corollary \ref{cor:Fq-2},
	this compact version should
	also hold if $\eta$ is an irreducible smooth representation
	of any finite dimension over $\C$ such that
	its modulo-$\ell$ reduction $\overline{\eta}$
	is irreducible over $F$.
	Furthermore, the other results of Section~\ref{subsec:ind-res}
	for finite groups should also hold for profinite groups.
	One can use the factorization of smooth representations
	through finite quotients and apply the theory of finite groups as
	done immediately above,
	or modify the proofs using suitable profinite analogues
	(e.g. the equivalence of induction and coinduction for
	locally profinite groups when $H \backslash G$ is compact
	(cf. \cite[I.1.5.2]{vigneras})
	and Mackey decomposition for locally profinite groups (cf.
	\cite[I.1.5.5]{vigneras}).	
\end{remark}


\section{Applications for finite and compact groups}
\label{sec:examples}
When $F$ is the complex numbers, there are many well-known examples of
Gelfand pairs and tools for finding multiplicity-free triples,
such as the generalized Bump--Ginzburg criterion
(\cite[Theorem~3.2]{csst}),
so it is not difficult to find cases to use
Corollary~\ref{cor:Fq} and Corollary~\ref{cor:Fq-compact}.
In this section, we highlight a few applications of
Theorem~\ref{thm:multiplicity-free-2} for
finite groups and compact groups.

\begin{rem}
	There are also many useful applications of
	multiplicity-free triples for locally compact groups. For instance,
	the Iwasawa decomposition of Gelfand pairs of locally compact
	groups with compact subgroups has consequences for the
	non-commutativity of Hecke algebras of Coxeter groups
	(cf. \cite{monod}). It may be more tractable to apply
	Theorem~\ref{thm:multiplicity-free-2}
	to the theory of automorphic representations, where
	the condition of finite generation may not be an obstruction for
	admissible representations of totally disconnected locally compact
	groups. Multiplicity-one theorems for $p$-adic groups,
	such as the uniqueness of
	Whittaker models by Jacquet--Langlands \cite{jacquet-langlands} and
	Shalika \cite{shalika} as well as its generalizations
	(e.g. Vign\'{e}ras \cite[III.1.11]{vigneras}), are key tools
	in the characteristic $0$ and characteristic $\ell \neq p$ theory.
\end{rem}

\subsection{Gelfand--Graev and \texorpdfstring{$\gl(n, q)$}{GL(n, q)}}
For $G = \gl_n\paren{F_{q}}$, there are many well-known
examples of Gelfand pairs, such as the infinite families
(cf. Bannai--Tanaka \cite{bannai-tanaka}):
\begin{itemize}
	\item $\paren{\gl_n\paren{\F_{{q}^2}}, \gl_n\paren{\F_{q}}}$;
	\item $\paren{\gl_n\paren{\F_{{q}^2}}, \gu_n\paren{\F_{q}}}$;
	\item $\paren{\gl_{2n}\paren{\F_{q}}, \Sp_{2n}\paren{\F_{q}}}$;
	\item $\paren{\gl_{2n}\paren{\F_{q}}, \gl_n\paren{\F_{{q}^2}}}$.
\end{itemize}
We recall a particularly important
twisted Gelfand pair that was first given by
Gelfand--Graev \cite{gelfand-graev}
(cf. \cite[Theorem~14.6.3]{ceccherini},
Piatetski--Shapiro \cite[Proposition~10.3]{ps}, and
\cite[Section~4.1]{bump}).

Let $G = \gl_2\paren{\F_{q}}$ for a prime power $q$ and consider the
unipotent subgroup,
\begin{align*}
	U &= \set{\begin{pmatrix}1 & x \\ 0 & 1\end{pmatrix} \in G \mid x \in \F_{q}}.
\end{align*}

\noindent
The result of Gelfand--Graev,
reinterpreted in the language of multiplicity-free triples,
is that $(G, U, \eta)$ is
a multiplicity-free triple over $\C$ for any nontrivial character
$\eta$ of $U$ by virtue of the commutativity of the Hecke algebra
$\calh(G, U, \eta)$. This multiplicity-freeness result
(for complex Gelfand--Graev representations) is the key ingredient in
the proof that there are $\frac{q(q-1)}{2}$ cuspidal representations
of $G$ and that every cuspidal representation of $G$ has dimension
$q - 1$.

The multiplicity-freeness result of Gelfand--Graev generalizes to
$G = \gl_n\paren{\F_{q}}$ for $n > 2$. In this case, there is the
unipotent subgroup $U \leq G$ of the form
\begin{align*}
	U &= \set{
		\begin{pmatrix}1 & x_{12} & x_{13} & \ldots & x_{1k} \\
									   & 1      & x_{23} & \ldots & x_{2k} \\
										 &        & 1      & \ddots & \vdots \\
										 &        &        & \ddots & \vdots \\
										 &        &        &        & 1
		\end{pmatrix} \in G \,\middle|\, x_{ij} \in \F_{q}},
\end{align*}
and given a nontrivial character $\psi$ of $\F_{q}$, a character
$\eta_\psi$ can be defined on $U$ via
\[
	\eta_\psi\paren{(x_{ij})} := \psi\paren{\sum_{i=1}^{n-1} x_{i,i+1}}.
\]
In this setting, $\paren{G, U, \eta_\psi}$ is a multiplicity-free
triple over $\C$ for any nontrivial character $\psi$ of $\F_{q}$.
By Corollary~\ref{cor:Fq},
the multiplicity-freeness result extends
to modular representations over $F$ the algebraic closure of any finite
field (even if $\ell := \Char F$ divides $q$ or the order of $G$).

\begin{cor}
	\label{cor:Gelfand-Graev-2}
	Let $F = \overline{\F}_\ell$ of any positive characteristic $\ell$
	and let $q$ be any prime power.
	For any nontrivial character $\psi$ of $\F_{q}$,
	$(G, U, \eta_\psi)$ is
	a multiplicity-free triple over $F$.
	In other words, $(G, U)$ is a $\eta_\psi$-twisted Gelfand pair
	over $F$.
\end{cor}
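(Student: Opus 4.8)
The plan is to reduce the positive-characteristic statement to the complex one and then invoke Corollary~\ref{cor:Fq} (Corollary~\ref{cor:Fq-2}). First I would recall the classical result of Gelfand--Graev: for $G = \gl_n(\F_q)$, the standard maximal unipotent subgroup $U$, and the nondegenerate character $\eta_\psi$ of $U$ attached to a nontrivial additive character $\psi$ of $\F_q$, the triple $(G, U, \eta_\psi)$ is a multiplicity-free triple over $\C$. This is the statement preceding the corollary and is cited there (Gelfand--Graev~\cite{gelfand-graev}, cf.~\cite[Theorem~14.6.3]{ceccherini}, \cite[Proposition~10.3]{ps}, \cite[Section~4.1]{bump}); the standard argument is Gelfand's trick applied to the transpose anti-involution, which preserves the relevant double cosets $U \backslash G / U$ together with the character $\eta_\psi$. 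So I may take the complex multiplicity-one statement as input.

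Next I would check that the hypotheses of Corollary~\ref{cor:Fq-2} are met. The group $G = \gl_n(\F_q)$ is finite, $U$ is a subgroup, and $\eta_\psi$ is a one-dimensional complex representation of $U$, hence certainly irreducible. The key point to verify is that the reduction of $\eta_\psi$ modulo $\ell$ is irreducible over $F = \overline{\F_\ell}$: since $\eta_\psi$ is a character (a one-dimensional representation), its mod-$\ell$ reduction $\overline{\eta_\psi}$ is again one-dimensional, hence automatically irreducible over $F$. Concretely, $\psi$ takes values in the group of $p$th roots of unity (where $p$ is the residue characteristic of $\F_q$), and reduction modulo $\ell$ sends these to a character $\overline{\eta_\psi}$ of $U$ into $F^\times$; whether $\ell = p$, $\ell \mid |G|$, or neither, the reduced object is still a single character of the abelian group $U$ and therefore irreducible. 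Thus the irreducibility condition on $\overline{\eta_\psi}$ in Corollary~\ref{cor:Fq-2} holds for every prime $\ell$.

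Then Corollary~\ref{cor:Fq-2} (equivalently Corollary~\ref{cor:Fq}) applies directly: since $(G, U, \eta_\psi)$ is a multiplicity-free triple over $\C$ and $\overline{\eta_\psi}$ is irreducible over $F = \overline{\F_\ell}$, the triple $(G, U, \overline{\eta_\psi})$ is a multiplicity-free triple over $F$. As $\ell$ ranges over all primes, this covers all finite fields of positive characteristic, and the characteristic-zero case is the input itself, so $(G, U, \eta_\psi)$ is a multiplicity-free triple over the algebraic closure of every finite field. Rephrasing ``multiplicity-free triple'' in the twisted-Gelfand-pair language gives the final assertion. I do not expect a genuine obstacle here; the only thing that needs a sentence of care is the irreducibility of $\overline{\eta_\psi}$, and that is immediate because $\eta_\psi$ is one-dimensional. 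One could alternatively bypass the reduction-mod-$\ell$ machinery and argue directly that Gelfand's trick (Corollary~\ref{cor:gelfand-trick}) applies over $F$ — the transpose anti-involution preserves all double cosets of $U$ and fixes $\eta_\psi$, so $\calh(G, U, \overline{\eta_\psi}, F)$ is commutative — and then quote Theorem~\ref{thm:gelfand-triple-2}; I would mention this as a remark since it makes the corollary self-contained in arbitrary characteristic without reference to a lift.
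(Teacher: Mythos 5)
Your proposal matches the paper's approach: Corollary~\ref{cor:Gelfand-Graev-2} is presented as an immediate consequence of Corollary~\ref{cor:Fq-2} applied to the classical Gelfand--Graev multiplicity-one result over $\C$, and the irreducibility hypothesis on the reduction is satisfied because $\eta_\psi$ is one-dimensional, exactly as you argue; the alternative you sketch via Corollary~\ref{cor:gelfand-trick} is a perfectly valid self-contained route as well. One imprecision worth correcting: you write that the argument works ``whether $\ell = p$, $\ell \mid |G|$, or neither,'' but when $\ell = p$ (equal characteristic, $\Char F \mid q$) the values of $\psi$ are $p$th roots of unity, which all reduce to $1$, so $\overline{\eta_\psi}$ is the \emph{trivial} character; in that case there are no nontrivial additive characters $\psi : \F_q \to F^\times$ at all, the statement for nontrivial $\psi$ is vacuous over $F$, and what one actually recovers is the untwisted Gelfand pair $(G, U, \triv_U)$, which the paper handles in a separate remark by citing the classical highest-weight theory of Steinberg, Curtis, and Richen.
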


\begin{remark}
	\label{rem:steinberg}
	In the equal characteristic ($\ell \mid q$) setting,
	the $\psi = \triv_U$ case
	can be deduced from classical works
	that $(G, U, \eta_\psi)$ is a multiplicity-free triple.
	A theorem of Steinberg \cite[Theorem~44(b)]{steinberglectures}
	(largely based on a result of Curtis \cite[Theorem~4.1]{curtis1965}
	and later extended from finite Chevalley groups to finite groups
	with a split ($B$, $N$)-pair / Tits building by
	Richen \cite[Theorem~3.9]{richen},
	cf. Curtis \cite[Theorem~4.3]{curtis1970})
	says that every irreducible representation of $G$
	over $F$ has a unique fixed vector (up to scalars)
	by $U$, where the unique fixed vector corresponds to the
	highest weight. This implies that $(G, U)$ is a Gelfand
	pair in equal characteristic without
	the twist by $\eta_\psi$ considered by
	Corollary~\ref{cor:Gelfand-Graev-2}.
	Note that this is the only case in the equal characteristic setting,
	since there are no nontrivial additive characters
	$\psi$ when $\ell \mid q$.
\end{remark}

Corollary~\ref{cor:Gelfand-Graev-2} directly recovers the uniqueness of
Whittaker models over all characteristics.

\begin{cor}
	\label{cor:whittaker-2}
	Let $F = \overline{\F}_\ell$ of any positive characteristic $\ell$
	and let $q$ be any prime power.
	An irreducible representation of $\gl_n(\F_{q})$
	over $F$ has at most one Whittaker model
	for a choice of a
	nontrivial additive character of $\F_{q}$.
\end{cor}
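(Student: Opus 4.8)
The plan is to reduce the Whittaker-model uniqueness statement directly to Corollary~\ref{cor:Gelfand-Graev-2}. Recall that a \emph{Whittaker model} of an irreducible representation $\pi$ of $G = \gl_n(\F_q)$ over $F$, with respect to a fixed nontrivial additive character $\psi$ of $\F_q$ and the associated character $\eta_\psi$ of the maximal unipotent subgroup $U$, is a realization of $\pi$ inside the space of functions $W\colon G \to F$ satisfying $W(ug) = \eta_\psi(u) W(g)$ for all $u \in U$, $g \in G$, on which $G$ acts by right translation. This space of functions is precisely $\coind_U^G(\eta_\psi)$, and since $G$ is finite this coincides with $\ind_U^G(\eta_\psi)$. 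Thus ``$\pi$ has a Whittaker model'' means exactly that $\hom_G\paren{\pi, \ind_U^G(\eta_\psi)} \neq 0$, and ``$\pi$ has \emph{at most one} Whittaker model (up to scaling)'' means $\dim_F \hom_G\paren{\pi, \ind_U^G(\eta_\psi)} \leq 1$.

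The key steps, in order, are: (1) spell out the identification of the space of Whittaker functions with $\ind_U^G(\eta_\psi)$ via Frobenius reciprocity, so that the uniqueness assertion is literally the statement $\dim_F \hom_G\paren{\pi, \ind_U^G(\eta_\psi)} \le 1$ for every irreducible $F[G]$-module $\pi$; (2) observe that $\eta_\psi$ is a one-dimensional representation of $U$, hence irreducible over $F$ regardless of the characteristic of $F$; (3) invoke Corollary~\ref{cor:Gelfand-Graev-2}, which asserts precisely that $(G, U, \eta_\psi)$ is a multiplicity-free triple over $F$, i.e. that $\dim_F \hom_G\paren{\pi, \ind_U^G(\eta_\psi)} \le 1$ for all irreducible $\pi$; (4) conclude. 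One should also note the degenerate case $p \mid q$ (so $F$ has characteristic dividing $q$): here there are no nontrivial additive characters $\psi$, so $\eta_\psi = \triv_U$, and the statement still holds by the remark following Corollary~\ref{cor:Gelfand-Graev-2} (or again directly by Corollary~\ref{cor:Gelfand-Graev-2} with the trivial character).

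I expect the only genuinely substantive point — and it is not really an obstacle, since all the work was done in earlier sections — to be making the translation in step (1) careful and precise: namely that the right-translation action on the $(U,\eta_\psi)$-equivariant function space is the coinduced module $\coind_U^G(\eta_\psi) = \hom_{F[U]}(F[G], \eta_\psi)$, that $\coind_U^G \cong \ind_U^G$ for the finite group $G$ (a standard fact, as used already in Section~\ref{subsec:ind-res}), and that Frobenius reciprocity $\hom_G\paren{\pi, \coind_U^G(\eta_\psi)} \cong \hom_U\paren{\res_U^G \pi, \eta_\psi}$ matches the space of embeddings of $\pi$ into the Whittaker model with the space of $\eta_\psi$-Whittaker functionals on $\pi$. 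Once this dictionary is in place, Corollary~\ref{cor:Gelfand-Graev-2} gives the bound of $1$ immediately, and there is nothing further to prove. Everything after step (1) is a one-line citation.
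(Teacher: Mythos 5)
Your proposal is correct and follows the same route the paper takes: Corollary~\ref{cor:whittaker-2} is deduced immediately from Corollary~\ref{cor:Gelfand-Graev-2} via the standard dictionary between Whittaker functionals and $\hom_G(\pi,\ind_U^G(\eta_\psi))$ (Frobenius reciprocity, with $\coind_U^G\cong\ind_U^G$ since $G$ is finite). One small note: in the equal-characteristic case $\Char F = p \mid q$, there simply are no nontrivial additive characters $\psi\colon\F_q\to F^\times$, so the corollary is vacuously true there; you do not need to appeal to the remark on $\eta_\psi=\triv_U$ or try to apply Corollary~\ref{cor:Gelfand-Graev-2} with a trivial character (that corollary is stated only for nontrivial $\psi$).
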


There are also known cases of complex multiplicity-free triples for
more general representations $\eta$, which therefore extend to
modular multiplicity-free triples by Corollary~\ref{cor:Fq}. For
instance,
Ceccherini-Silberstein--Scarabotti--Tolli \cite[Section~6]{csst}
proved that for an odd prime power $q$,
$\paren{\gl_2\paren{\F_{q}}, C, \eta}$ is a multiplicity-free triple
over $\C$, where $C$ is the Cartan subgroup
\begin{align*}
	C &= \set{\begin{pmatrix}a & b\zeta \\ b & a\end{pmatrix} \in G
		\mid a, b \in \F_q \backslash \set{0}, \zeta \textrm{ a generator of } \F_q^*} \cong \F_{{q}^2}
\end{align*}
and $\eta$ is an multiplicative character of $\F_{{q}^2}$.

\begin{remark}
	Given the connections between the Gelfand--Graev representation
	and cuspidal representations,
	this example hints at interesting questions about how modular
	multiplicity-freeness results could work in the setting of
	cuspidal representations. In particular,
	Baruch--Rallis \cite{baruch-rallis} defined a notion of
	(super)cuspidal Gelfand pairs $(G, H)$ that naturally extends to
	triples $(G, H, \eta)$. It would be useful to develop
	general modular techniques in this direction, where
	for over algebraically closed fields of characteristic coprime to
	odd $q$,
	S\'{e}cherre \cite[Corollary~2.16]{secherre} and
	Zou \cite[Theorem~1.2]{zou-unitary}
	showed that $\paren{\gl_{2n}(\F_{q}),
	\gl_{n}(\F_q) \times \gl_n(\F_{q})}$ is a cuspidal Gelfand pair
	and
	$\paren{\gl_{n}\paren{\F_{q}}, \paren{\gl_{n}(\F_{q})}^\tau}$,
	where $\tau$ is a unitary involution,
	is a supercuspidal Gelfand pair.
\end{remark}

\subsection{Trilinear forms of quaternion division algebras}
For the quaternion division algebra $D_k$
over a local field $k$, its multiplicative group modulo center
$D_k^* / Z$ is compact (cf. \cite[Section~6]{carayol}).
Since any irreducible smooth
representation of $D_k^*$ has a central character, any such
representation factors through a finite quotient after a twist.

One application of Corollary~\ref{cor:Fq} is that
the multiplicity-one theorem for trilinear
forms on complex irreducible smooth representations of quaternion
division algebras over local fields by
Prasad \cite[Theorem 1.1]{prasad-thesis}
can be extended to all characteristics.
\begin{cor}
	\label{cor:qda-2}
	Let $F = \overline{\F}_\ell$ of any positive characteristic $\ell$,
	$k$ be any local field, and $D_k$ be the quaternion division
	algebra over $k$.
	If $\pi_1, \pi_2, \pi_3$ are three irreducible smooth
	representations of $\qda$ over $F$ for some prime
	power $q$, then there exists at most one
	(up to isomorphism) non-zero $\qda$-invariant linear form
	on $\pi_1 \otimes \pi_2 \otimes \pi_3$ over $F$.
\end{cor}

\begin{proof}
	Define the tensor product representation
	$\pi := \pi_1 \otimes \pi_2 \otimes \pi_3$ of
	$G := \qda \times \qda \times \qda$. Let $H := \Delta \qda$ be the
	diagonal subgroup of $G$.
	The number of non-zero $\qda$-invariant linear forms
	on $\pi_1 \otimes \pi_2 \otimes \pi_3$ over $F$
	is given by
	\[
		\dim_{F} \hom_{H}
			\paren{\res_{H}^G \paren{\pi}, \triv_{H}},
	\]
	which is at most one if
	$(G, H)$ is a Gelfand pair.
	
	Over $\C$, the anti-involution $\iota$ of
	$G$ defined by
	\[
		\iota(x, y, z) := \paren{\tr(x) - x, \tr(y) - y, \tr(z) - z}
	\]
	preserves $H$-cosets (\cite[Proposition 3.3]{prasad-thesis}).
	Therefore,
	$(G, H)$ is a Gelfand pair
	over $\C$. By Corollary~\ref{cor:Fq-compact},
	it is also a Gelfand
	pair over $F$.
\end{proof}
The original result by Prasad over the complex numbers
was one of the ingredients for the proof by
Harris--Kudla \cite{harris-kudla-1991,
harris-kudla-2004} of the Jacquet conjecture on the
non-vanishing of the central value of triple product L-functions.
With the further understanding of modular multiplicity-free triples of
locally compact groups, one could consider
modular versions of the work of Prasad and Harris--Kudla to describe
modular properties of the central value of triple product L-functions.


\section{Multiplicity-free restrictions}
\label{sec:restrictions}

Let $F$ be an algebraically closed field and
let $\rho$ be an irreducible $F[S_n]$ representation,
where $S_n$ is the symmetric group on $n$ letters.
Consider $S_{n-1}$ as the subgroup of $S_n$ consisting
of the permutations of the first $n-1$ letters.
If the characteristic $F$ is zero then the restriction
$\res^{S_n}_{S_{n-1}}(\rho)$ is completely reducible and
multiplicity-free, with its composition factors described
by the standard branching theorem.
If the characteristic of $F$ is non-zero, then
the multiplicity of a composition factor can be
arbitrarily large \cite[Corollary~3.3]{jantzen-seitz}.

Kleschev \cite{kleschev-1, kleschev-2, kleschev-4}
proved the conjectures of Benson \cite{benson}
(for characteristic $2$) and
Jantzen--Seitz \cite{jantzen-seitz} (for odd characteristic)
surrounding the question of when $\res^{S_n}_{S_{n-1}}(\rho)$ is irreducible
for $p>0$. One can broaden these conjectures to the following question.
\begin{question}[{\cite[Problem 1]{kleschev-morotti-tiep}}]
	\label{q:kleschev}
	Let $F$ be an algebraically closed field.
	Classify the triples $(G, H, \rho)$, where $H$ is a subgroup of $G$ and
	$\rho$ is a representation of $G$ over $F$ of dimension greater than
	$1$ such that the restriction $\res^{G}_{H}(\rho)$ is irreducible.
\end{question}
For maximal subgroups of classical algebraic groups over $F = \C$,
the study of this question goes back to Dynkin \cite{dynkin}.
For $G = S_n$ or $A_n$,
this question is completely answered over fields of characteristic $0$
by Saxl \cite{saxl}, over fields of characteristic greater than $3$
by Brundan--Kleschev \cite{brundan-kleschev} and
Kleschev--Sheth \cite{kleschev-sheth}, and
over fields of characteristic $2$ and $3$ by 
Kleschev--Morotti--Tiep \cite{kleschev-morotti-tiep}.

For general fields, the resolution of Question~\ref{q:kleschev}
has applications to several problems in representation theory,
such as the Aschbacher--Scott program on understanding
maximal subgroups of finite classical groups
(cf. \cite[Section~1]{kleschev-morotti-tiep}).

We can consider an broader question, allowing the restriction
$\res^{G}_{H}(\rho)$ to be reducible but asking when
it is composed of unique
irreducible representations.
\begin{question}
	\label{q:mult-free}
	Let $F$ be an algebraically closed field.
	Classify the triples $(G, H, \rho)$, where $H$ is a subgroup of $G$ and
	$\rho$ is a representation of $G$ over $F$ of dimension greater than
	$1$ such that the restriction $\res^{G}_{H}(\rho)$ is multiplicity-free.
\end{question}

For irreducible representations of classical algebraic groups and
reductive groups over $F = \C$,
this problem is closely related to the classical works
on $H$-varieties and invariant theory
of Kac \cite[Theorem~3]{kac}, Weyl \cite{weyl}, and
Howe \cite[Chapters 3-4]{howe}. Much of this body of work
(including extensions by Brion \cite{brion}, Benson--Ratcliff
\cite{benson-ratcliff},
and Leahy \cite{leahy}, as well as
the classification for simple algebraic groups
of Kr\"{a}mer \cite{kramer})
can be interpreted as answering Question~\ref{q:mult-free},
under the more restrictive condition that
$\res^{G}_{H}(\rho)$ is multiplicity-free
for all irreducible representations $\rho$ of $G$
(cf. \cite[Section~5.7, Section~12.2]{goodman-wallach} and
\cite[Chapter~1]{lst2}).

Recently, Liebeck--Seitz--Testerman \cite{lst1, lst2} gave a classification
of irreducible representations $\rho$ with multiplicity-free
restrictions when $G$ and $H$ are simple algebraic groups of type $A$
over an algebraically closed field of characteristic $0$
and $H$ is an irreducible subgroup of $G$.
Their result uses the work of Stembridge \cite{stembridge}, who 
answered Question~\ref{q:mult-free}
for tensor products $\rho_1 \otimes \rho_2$ of
irreducible representations of a simple algebraic group $H$ of type
$A$, with $G = \gl(\rho_1) \times \gl(\rho_2)$,
over an algebraically closed field of characteristic $0$.

Question~\ref{q:mult-free} is also related to the notion of strong
Gelfand pairs. For finite or compact groups,
the pair $(G, H)$ is a \emph{strong Gelfand pair}
if
\[
	\dim_F \hom_H \paren{\restr{\rho}{H}, \eta} \leq 1
\]
for every irreducible representation $\rho$ of $G$ and every
irreducible representation $\eta$ of $H$.
This is then equivalent to
$(G \times H, \Delta H)$ being a Gelfand pair
(here $\Delta H$ is the diagonal of $H$ in $H \times H$)
and is therefore characterized by the commutativity of a
Hecke algebra.

A naive application of Theorem~\ref{thm:multiplicity-free-3} and
Corollary~\ref{cor:rel-proj-indres}(iii) to Question~\ref{q:mult-free}
is the following corollary.
\begin{cor}
	\label{cor:restriction}
	Let $F$ be an algebraically closed field,
	$H$ be a subgroup of a discrete finitely generated group $G$ such that
	$\ind_H^G$ is an exact functor, and $\rho$ be a finite-dimensional
	representation of $G$.
	If $\rho$ is $\ind_H^G(\res_H^G(\rho))$-injective and
	$\End_H(\res_H^G(\rho))$ is commutative, then
	$\res^{G}_{H}(\rho)$ is multiplicity-free.
\end{cor}

\begin{proof}
	By Corollary~\ref{cor:rel-proj-indres}(iii)
	with $M = \rho$ and $N = \res_H^G(\rho)$,
	$\res_H^G(\rho)$ is self-injective. Then by
	Theorem~\ref{thm:multiplicity-free-3}, we have
	the multiplicity-freeness of $\res^{G}_{H}(\rho)$.
\end{proof}


\section*{Acknowledgements}
I am grateful to my advisor, Michael Harris, for first
suggesting a project on trilinear forms of modular representations of
quaternion division algebras and $\gl_2$ which ultimately
led to this work.
I am also grateful to Dipendra Prasad for many suggestions
in this paper,
such as Non-examples \ref{nonex:non-commutative} and
\ref{nonex:higher-multiplicity} and Remark \ref{rem:steinberg}.
I would also like to thank
Benedict Gross, Alexander Kleschev,
Chao Li, Gilbert Moss, Vincent S\'{e}cherre, and
Marie-France Vign\'{e}ras for helpful comments and suggestions.
The anonymous referee deserves special thanks for feedback
that led to important corrections and clarifications.

This material is based on work
supported by the National Science Foundation
under Grant No. DGE-1644869 and Grant No. DMS-2303280.


\bibliography{bibliography}{}
\bibliographystyle{amsalpha}

\end{document}